\numberwithin{equation}{subsection}
\newtheorem{theorem}{Theorem}[subsection]
\newtheorem{lemma}[theorem]{Lemma}
\newtheorem{proposition}[theorem]{Proposition}
\newtheorem{corollary}[theorem]{Corollary}
\theoremstyle{definition}
\newtheorem{definition}[theorem]{Definition}
\newtheorem{remark}[theorem]{Remark}
\newtheorem{example}[theorem]{Example}
\newtheorem{examples}[theorem]{Examples}
\newtheorem{hypotheses}[theorem]{Hypotheses}
\newcommand{\mk}{\mathbf{k}}
\newcommand{\A}{\mathcal{A}}
\newcommand{\C}{\mathcal{C}}
\newcommand{\D}{\mathcal{D}}
\newcommand{\E}{\mathcal{E}}
\newcommand{\F}{\mathcal{F}}
\newcommand{\G}{\mathcal{G}}
\newcommand{\Lcal}{\mathcal{L}}
\newcommand{\M}{\mathcal{M}}
\newcommand{\Ocal}{\mathcal{O}}
\newcommand{\Pcal}{\mathcal{P}}
\newcommand{\Scal}{\mathcal{S}}
\newcommand{\T}{\mathcal{T}}
\newcommand{\W}{\mathcal{W}}
\newcommand{\X}{\mathcal{X}}
\newcommand{\Y}{\mathcal{Y}}
\newcommand{\Simpl}{\mathbf{\Delta}}
\newcommand{\Sset}{s\mathbf{S}}
\newcommand{\Alg}{\mathbf{Alg}}
\newcommand{\Mon}[1]{\mathbf{Mon}(#1)}
\newcommand{\sMod}{\mathbf{\Sigma Mod}}
\newcommand{\Op}{\mathbf{Op}}
\newcommand{\Sp}{\mathbf{Sp}}
\newcommand{\Top}{\mathbf{Top}_*}
\newcommand{\loc}[2]{\mathcal{#1}[\mathcal{#2}^{-1}]}
\newcommand{\rdf}{\mathbb{R}}
\newcommand{\quis}{\textrm{quasi-isomorphism}}
\newcommand{\Cochains}[1]{\mathbf{C}^*(#1)}
\newcommand{\Cochainsp}[1]{\mathbf{C}^{\geq b}(#1)}
\newcommand{\Cochainsk}{\mathbf{C}^{\geq 0}(\mk)}
\newcommand{\Sheaf}{\mathbf{Sh}}
\newcommand{\PrSheaf}{\mathbf{PrSh}}
\newcommand{\Sh}[2]{\mathbf{Sh}(#1,#2)}
\newcommand{\PrSh}[2]{\mathbf{PrSh}(#1,#2)}
\newcommand{\y}{\mathbf{y}}
\newcommand{\dirlim}{\underset{\longrightarrow}{\mathrm{colim}}\,}
\newcommand{\holim}{\underset{\longleftarrow}{\mathrm{holim}}\,}
\newcommand{\unidad}{\mathbf{1}}
\newcommand{\Homsheaf}{{\mathcal Hom}\,}
\newcommand{\op}{\mathrm{op}}
\newcommand{\id}{\mathrm{id}}
\newcommand{\TW}{\mathrm{TW}}
\newcommand{\simple}{\mathbf{s}}
\newcommand{\Lie}{\mathcal Lie}
\newcommand{\Com}{\mathcal Com}
\newcommand{\Endop}{\mathrm{End}}
\newcommand{\mc}[1]{\mathcal{#1}}
\newcommand{\mrm}[1]{\mathrm{#1}}
\newcommand{\mbf}[1]{\mathbf{#1}}
\newcommand{\mbb}[1]{\mathbb{#1}}
\newcommand{\Dl}{\ensuremath{\Simpl} }
\newcommand{\ds}{\displaystyle}
\begin{document}
\begin{large}

\title[Godement resolution]{Godement resolution and operad sheaf homotopy theory}

\author{Beatriz Rodr\'{\i}guez Gonz\'{a}lez}
\address{ICMAT\\ CSIC-Complutense-UAM-CarlosIII\\ Campus Cantoblanco, UAM. 28049 Madrid, Spain.}

\author{Agust\'{\i} Roig}
\address{Dept. Matem\`{a}tica Aplicada I\\ Universitat Polit\`{e}cnica de Catalunya, UPC, and BGSMath \\ Diagonal 647, 08028 Barcelona, Spain.}

\thanks{First named author partially supported by ERC Starting Grant project TGASS and by contracts SGR-119 and FQM-218. Second named author partially supported by projects MTM2012-38122-C03-01/FEDER and 2014 SGR 634}

\begin{abstract} We show how to induce products in sheaf cohomology for a wide variety of coefficients: sheaves of dg commutative and Lie algebras, symmetric $\Omega$-spectra, filtered dg algebras, operads and operad algebras.
\end{abstract}

\date{\today}
\maketitle
\tableofcontents

\section{Introduction}

\subsection{Products in sheaf cohomology} How to induce products in sheaf cohomology and other derived functors of sheaves has been a matter of interest since the beginning of sheaf theory \cite{Go}, and keeps receiving attention persistently \cite{GK}, \cite{Ja1}, \cite{Sw}, etc.

With classic abelian sheaves, the problem appears quite blatantly, because the tensor product of two injective resolutions is not necessarily an injective resolution, and so products don't go through them. Despite this fact, people managed to produce products in sheaf cohomology when needed in Hodge theory \cite{N}, or deformation quantization \cite{Be}, for instance. Now that operad theory and its applications have reached the state of sheafification \cite{Hin}, \cite{KM}, as also is the case with symmetric spectra \cite{Ja2}, products in sheaf cohomology, and other derived functors, might deserve some attention again.

The problem we address in this paper is the following: let $\X$ be a Grothendieck site, $\D$ a monoidal category, $\Op^\D$ the category of operads in $\D$ and $\Sheaf (\X, \D)$ the category of sheaves on $\X$ with coefficients in $\D$. Given a sheaf of operads $\Pcal \in \Sheaf (\X, \Op^\D)$ and a sheaf of $\Pcal$-algebras $\A$, how can we transfer the products, i.e., structural morphisms, from $\Pcal$ and $\A$ to their cohomologies $\rdf \Gamma (X, \Pcal)$ and $\rdf \Gamma (X, \A)$?

In \cite{RR} we gave necessary and sufficient conditions on the Gro\-then\-dieck site $\X$ and the category of coefficients $\D$ in order that the Godement cosimplicial resolution produces a fibrant model $\mbb{H}_{\X} (\F)$ for every sheaf $\F $ and hence it can be used to transfer homotopical structure from $\D$ to the category of sheaves $\Sheaf (\X , \D)$. In particular, sheaf cohomology with coefficients in $\F$, $\rdf \Gamma (X, \F)$ can be computed as $\Gamma (X, \mbb{H}_{\X} (\F))$.

Building on our main result in \cite{RR}, we provide here a positive answer to the question about products: if a sheaf $\F$ comes equipped with some kind of \lq\lq product" $\F \otimes \F \longrightarrow \F$, this product passes to cohomology $\rdf \Gamma (X, \F) \otimes \rdf \Gamma (X, \F) \longrightarrow \rdf \Gamma (X, \F)$ through the Godement cosimplicial resolution, whenever this resolution provides a fibrant model for  $\F$. As a consequence, for instance, for a sheaf of operads $\Pcal$ and a sheaf of $\Pcal$-algebras $\A$, $\rdf \Gamma (X, \Pcal)$ is an operad and $\rdf \Gamma (X, \A)$ is a $\rdf \Gamma (X, \Pcal)$-algebra.

\subsection{The Godement resolution is a monoidal functor} This is true for a simple reason: for every monoidal coefficient category $\D$ for which it can be defined, the Godement cosimplicial resolution is a \textit{monoidal} functor, under mild assumptions.

This seems to have been well known for a while, at least for some particular cases of $\D$ (\cite{N}, \cite{Le}), even if the word \lq\lq monoidal" was not yet available to name the phenomenon (\cite{Go}, \cite{Sw}); but, to the best of our knowledge, it has not explicitly been stated for general coefficients $\D$. That is, in a nutshell, what we do here: prove that, when available, the Godement cosimplicial resolution is a monoidal functor, check when it provides a fibrant model for every sheaf, how this fibrant model inherits no matter of multiplicative structure the original sheaf can bring with it, and passes it to sheaf cohomology and other derived functors.

We show, particularly, that our scheme applies to sheaves of commutative and Lie dg algebras (\cite{N}, \cite{Hin}, \cite{Be}, examples (3.1.8)), the operad sheaf of multidifferential operators modelling systems of partial differential equations (polynomial in the derivatives, \cite{KM}, example (3.2.3)) and sheaves of symmetric omega spectra (\cite{Ja2}, example (3.3.5)).

But, to encompass all these situations, we need to go slightly further than we have just briefly summarized. Since the tensor product of sheaves is not necessarily a sheaf, in order to have a monoidal structure on $\Sheaf (\X, \D)$ induced by the one on $\D$ and hence be in a position to talk about sheaves with \lq\lq products", we need an associated sheaf functor to make a sheaf $\F \otimes \F$ out of the presheaf $U \mapsto \F (U) \otimes \F (U)$. Indeed, the associated sheaf functor is available for a large class of coefficients categories $\D$. Namely, it only demands for $\D$ to have all small limits, filtered colimits and the commutation of finite limits and the later ones (see \cite{Ul}). But these simple conditions would rule out some interesting $\D$, such as, Kan complexes or symmetric omega spectra, with their exciting commutative smash product \cite{HSS}.

Therefore, we have taken some pains in order to include these cases in our work too. So, our first version of the main result (\ref{maintheorem2}) says that, with no need for an associated sheaf, if the Godement resolution provides fibrant models for $\Sheaf (\X, \D)$, so does it for the categories of sheaves of operads and operad algebras in $\D$.

Nevertheless, the category of symmetric omega spectra seems elusive and this general result applies neither: it needs a slight refinement, which we state and prove in (\ref{CEAlgP}). The reader having an associated sheaf functor for her category of coefficients $\D$ can skip this discussion and go directly to the strongest version of our result (\ref{lastone}) in the final section (3.4).

\subsection{Compatible, descent and CE-categories} Our results in this paper are stated in the language of \textit{descent} and \textit{CE-categories} (\cite{Ro1}, \cite{GNPR}, \cite{P}; see also \cite{C}, \cite{CG1}, \cite{CG2}, \cite{CG3} for other examples of their use) and are based on our previous main theorem (4.14) in \cite{RR}. We now go over these concepts and recall the theorem.

\begin{theorem}\label{maintheoremRR} Let $\X$ be a Grothendieck site and $(\D, \mathrm{E})$ a descent
category satisfying the hypotheses (\ref{hipotesis1}) below. Then, the
following statements are equivalent:
\begin{enumerate}
 \item[\emph{(1)}] $(\Sh{\X}{\D}, \Scal, \mc{W})$ is a right Cartan-Eilenberg category and for every sheaf $\F \in \Sh{\X}{\D}$,
$\rho_\F : \F \longrightarrow \mbb{H}_{\X} (\F)$ is a CE-fibrant model.
 \item[\emph{(2)}] For every sheaf $\F\in \Sh{\X}{\D}$, $\rho_\F : \F\longrightarrow \mbb{H}_{\X} (\F)$ is in $\mc{W}$.
 \item[\emph{(3)}] The simple functor commutes with stalks up to equivalences.
 \item[\emph{(4)}] For every sheaf $\F\in \Sh{\X}{\D}$, $\mbb{H}_{\X} (\F)$ satisfies Thomason's descent; that is, $\rho_{\mbb{H}_{\X} (\F)} :
\mbb{H}_{\X} (\F) \longrightarrow \mathbb{H}^2_\X (\F)$ is in $\Scal$.
\end{enumerate}
\end{theorem}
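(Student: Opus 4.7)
\emph{Proof proposal.} The route I would take is a cycle $(1) \Rightarrow (2) \Rightarrow (3) \Rightarrow (4) \Rightarrow (1)$, all driven by the fact that $\mbb{H}_\X(\F)$ is by construction the simple functor $\simple$ applied to the Godement cosimplicial object $G^{\bullet}\F$ associated with the stalk--skyscraper comonad, together with the stalkwise detection of weak equivalences in $\Sh{\X}{\D}$.

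The implication $(1) \Rightarrow (2)$ is immediate from the definition of a CE-fibrant model. For $(2) \Leftrightarrow (3)$, the plan is to observe that at each stalk $x$, the canonical comparison $(\simple G^{\bullet} \F)_x \to \simple((G^{\bullet} \F)_x)$ has as target an object already $\mathrm{E}$-equivalent to $\F_x$, because the cosimplicial object $(G^{\bullet}\F)_x$ carries an extra codegeneracy coming from the unit of the stalk--skyscraper adjunction at $x$; hence $\rho_\F \in \mc{W}$ for all $\F$ is equivalent to $\simple$ commuting with stalks up to $\mathrm{E}$-equivalence. The equivalence $(2) \Leftrightarrow (4)$ is the Thomason descent mechanism: functoriality of $\mbb{H}_\X$ and two-out-of-three already give $\rho_{\mbb{H}_\X\F}\in \mc{W}$ from (2), and (4) upgrades this to the strong class $\Scal$; conversely, (4) combined with comparison of $\rho_{\mbb{H}_\X\F}$ and $\mbb{H}_\X(\rho_\F)$ via the cosimplicial identities reduces the statement for $\rho_\F$ back to the stalkwise description $(3)$.

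The main obstacle will be $(4) \Rightarrow (1)$, where one has to build the right Cartan--Eilenberg structure from scratch. The plan is to show that each $\mbb{H}_\X(\F)$ is CE-fibrant by using $(4)$ to deduce that any $\mc{W}$-map into such an object becomes a $\Scal$-map after applying $\mbb{H}_\X$, together with the fact that $\mbb{H}_\X$ preserves $\Scal$; the natural transformation $\rho_\F$ then furnishes the universal fibrant replacement. The delicate part is compatibility with the descent-category axioms on $(\D,\mathrm{E})$ from (\ref{hipotesis1}): one must verify that $\simple$ sends levelwise $\mathrm{E}$-maps of cosimplicial objects to $\mathrm{E}$-maps and that iterated Godement resolutions remain inside the distinguished classes, which is precisely where those hypotheses enter crucially.
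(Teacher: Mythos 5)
The first thing to say is that this paper contains no proof of Theorem \ref{maintheoremRR} to compare your proposal against: the theorem is explicitly \emph{recalled} from the authors' earlier work (Theorem 4.14 of \cite{RR}) and is used in the present paper only as a black box, the input to the transfer results for operads and operad algebras. So any assessment of your argument has to be on its own merits rather than against an in-paper proof.

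On those merits, your outline is broadly the right shape, and two of your ingredients match what one would expect from \cite{RR} and from how the present paper argues its own Theorem \ref{CEAlgP}: the equivalence $(2)\Leftrightarrow(3)$ does come down to the extra codegeneracy on $(G^{\bullet}\F)_x = (T^{\bullet+1}\F)_x$ supplied by the unit of the stalk--skyscraper adjunction (so that $\simple((G^{\bullet}\F)_x)\simeq \F_x$ by the normalization axioms of a descent category), and the passage to statement $(1)$ is most naturally done via the resolvent-functor criterion of \cite[Theorem 2.5.4]{GNPR}, which is exactly the tool the present paper invokes when it proves the analogous result for $\Pcal$-algebras. The weak point is your treatment of $(2)\Leftrightarrow(4)$. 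You write that functoriality and two-out-of-three give $\rho_{\mbb{H}_\X\F}\in\W$ and that $(4)$ ``upgrades this to $\Scal$''; but the upgrade from a stalkwise equivalence to a section-wise (strong) one is the entire content of Thomason's descent and does not follow from functoriality. Establishing $(2)\Rightarrow(4)$ requires a separate argument showing that $\Gamma(U,G^{\bullet}\G)$ acquires extra codegeneracies when $\G$ is (equivalent to) something in the image of $T=p_*p^*$, together with a commutation of $\simple$ past $G^{\bullet}$ using hypotheses (G1)--(G2); your sketch does not supply this, and it is the genuinely technical step of the theorem.
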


\begin{hypotheses}\label{hipotesis1} The required hypotheses are the following:

\begin{enumerate}
\item[(G0)] $\X$ is a Grothendieck site with a set $X$ of enough points.
\item[(G1)] $\D$ is closed under filtered colimits and arbitrary products.
\item[(G2)] Filtered colimits and arbitrary products in $\D$ are $\mathrm{E}$-exact.
\end{enumerate}

\end{hypotheses}

Here, $\mathrm{E}$-exact means preserving equivalences (\cite{RR}, definition (4.1)).

We called a pair $(\X, \D)$ satisfying the equivalent conditions of this theorem \emph{compatible}. Let us explain the main definitions involved in our result.

\begin{enumerate}
\item A \emph{descent category} $(\D, \mathrm{E})$ consists, roughly speaking, of three pieces of data: a category $\D$, a class of distinguished morphisms $\mathrm{E}$, called \emph{equivalences}, and a \emph{simple} functor $\simple : \Simpl\D \longrightarrow \D$. The reader may safely anchor her ideas for the moment by thinking of $\D$ as being the category of cochain complexes of abelian groups, $\mathrm{E}$ the class of cohomology isomorphisms and $\simple$ as the total complex of a double complex (see \cite{RR} and \cite{Ro1} for the precise definitions, cf. \cite{GN}). Alternatively, she can think of $\simple$ as her preferred $\holim$ functor.
\item A \emph{Cartan-Eilenberg} structure on a category $(\C, \Scal, \W)$ is the minimum amount of data necessary in order to talk about \emph{fibrant} objects and derive functors. These data are two classes of distinguished morphisms $\Scal \subset \W$, \emph{strong} and \emph{weak} equivalences, and a class of fibrant objects. For sheaves, these classes are the \emph{global} equivalences (object-wise defined) and \emph{local} equivalences (fibre-wise defined), respectively (see \cite{RR} and \cite{GNPR} for the precise definitions).
\item The fibrant objects turn out to be the  \emph{hypercohomology} sheaves $\mathbb{H}_\X(\F) = \simple G^\bullet (\F)$: resulting from the application of the simple functor $\simple$ to the cosimplicial Godement resolution $G^\bullet \F$ (see \cite{RR} for the details, cf. \cite{Mit} and \cite{Th} for sheaves of spectra).
\end{enumerate}

So, our theorem proves the following to be equivalent: (a) the fact that the hypercohomology sheaf $\mathbb{H}_\X(\F)$ is a fibrant resolution of $\F$, (b) the existence of a Cartan-Eilenberg structure in the category of sheaves $\Sheaf (\X, \D)$, (c) Thomason's descent and (d) a condition expressed in terms of the \lq\lq raw" data: the simple functor $\simple$, the equivalences $\mathrm{E}$ of the coefficient category $\D$, and the topology of the site, i.e., the fibres of the sheaves.

This last condition can also be read as follows: the universal map $(\simple G^\bullet\F )_x \longrightarrow \simple (G^\bullet\F_x)$ must be an equivalence in $\D$. For instance, for the category of bounded below cochains complexes of sheaves, this condition is trivially satisfied since the total complex commutes with colimits. Another way to look at this condition: since all simple functors are realizations of homotopy limits \cite{Ro2}\footnote{So, in particular, example (2.1.6)  proves that the \emph{Thom-Whitney simple functor} of \cite{HS} and \cite{N} is a realization of a homotopy limit too.}, it requires for some filtered colimits indexed by $\X$ to commute up to equivalence with homotopy limits in $\D$: $\dirlim \holim  \simeq \holim \dirlim $. So, in order the Godement resolution produces a fibrant model for every sheaf the topology of the site and the homotopy structure of the coefficient category must be intertwined (cf \cite{MV}, proposition (1.61), for sheaves of simplicial sets).

Some examples of compatible pairs we showed in (\cite{RR}):
\begin{enumerate}
\item The category of uniformly bounded below cochain complexes $\Cochainsp{\A}$ of an $(AB4)^*$ and $(AB5)$ abelian category $\A$, taking the \quis\ as equivalences $\mathrm{E}$ and the total complex of a double complex as the simple functor, is a descent category compatible with any site.
\item The category of unbounded cochains complexes $\Cochains{R}$ of $R$-modules, with the same equivalences and with the total (product) complex as simple functor, is a descent category compatible with any finite cohomological dimension site.
\item The category of filtered complexes $\textbf{F}\Cochainsp{\A} $, with $\mathrm{E}_r$-\quis\ as equivalences and the total complex as simple functor, is compatible with any site.
\item The category of Kan simplicial complexes $\Sset_f$, with $\mathrm{E}$ as the weak homotopy equivalences and $\holim$ as simple functor, is a descent category compatible with any site of finite type.
\item The category of fibrant spectra $\Sp_f$, with $\mathrm{E}$ the stable weak equivalences and its $\holim$ as simple functor, is a descent category compatible with any site of finite type.
\end{enumerate}

In this paper we apply these techniques and extend our main theorem (\ref{maintheoremRR}) and its corollaries for compatible pairs $(\X,\D)$ to sheaves of operads and operad algebras in $\D$. Precisely, if $\E$ is a category of operads or operad algebras in $\D$, our main results says -with different flavors; theorems (3.1.4), (3.2.5), (3.3.8) and (3.4.9)- that also $(\X, \E)$ is a compatible pair and therefore:

\begin{itemize}\label{PropertiesCompat}
\item[$\bullet$] For every sheaf $\F$, the natural arrow $\rho_\F : \F \longrightarrow \mathbb{H}_\X(\F)$ is a fibrant model of $\F$. Or, equivalently, $(\Sh{\X}{\E},\Scal,\W)$ is a CE-category with resolvent functor $(\mbb{H}_{\X},\rho)$.
\item[$\bullet$] The localized category $\loc{\Sh{\X}{\E}}{\W}$ is naturally equivalent to $\loc{\Sh{\X}{\E}_{\mrm{fib}}}{\Scal}$.
\item[$\bullet$] The CE-fibrant objects of $\Sh{\X}{\E}$ are precisely those sheaves satisfying Thomason's descent.
\item[$\bullet$] Derived sections $\mbb{R}\Gamma(U,-)$ and derived direct image functor $\mbb{R}f_*$ may be computed by precomposing with $\mbb{H}_{\X}$.
\item[$\bullet$] The hypercohomology sheaf $\mbb{H}_{\X}$ is a `homotopical' sheafification functor that gives an equivalence $\loc{\Sh{\X}{\E}}{\W}\simeq \loc{\PrSh{\X}{\E}}{\W}$.
\end{itemize}

For that, our strategy is quite simple: in section 2, we show how categories of operads and operad algebras inherit descent structures from a given \emph{monoidal} descent categoy $\D$. Next, in section 3, we prove that, thanks to our main theorem (\ref{maintheoremRR}), the Godement resolution provides fibrant models for sheaves of operads and operad algebras whenever it does for sheaves with coefficients in $\D$.

\section{Descent categories with products}

\subsection{Descent and monoidal structures}

\subsubsection{} Let  $(\C , \otimes,\unidad)$ be a symmetric monoidal
category. A (lax) {\it symmetric monoidal functor\/} between symmetric monoidal categories $(\C, \otimes ,\unidad ), (\D , \otimes , \unidad) $ is a functor $F: \C \longrightarrow \D $ together with a natural
transformation, \emph{Künneth morphism}, $ \kappa_{XY} : FX
\otimes FY \longrightarrow F(X\otimes Y)$ and a \emph{unit} morphism
$\eta : \unidad \longrightarrow F\unidad$ in $\D$, compatible with the
associativity, commutativity and unit constraints.

\begin{definition}\label{MonoidalDescentDefi} A {\it monoidal descent category\/} is a descent category $(\D, \mrm{E}, \simple , \mu , \lambda )$ (see \cite{Ro1}, \cite{RR}) such that:

\begin{enumerate}
\item[(M1)] $\D$ is a symmetric monoidal category.
\item[(M2)] The simple functor $\simple : \Simpl\D \longrightarrow \D$ is a symmetric
monoidal functor.
\item[(M3)] The natural transformations $\mu: \simple\simple\longrightarrow\simple\mathrm{D}$ and  $\lambda:\id_{\D}\longrightarrow \simple c$ are monoidal.
\end{enumerate}
\end{definition}

\subsubsection{}\label{examples1} Let us describe some examples of monoidal descent categories.

\begin{example}\label{examplemodelcategories1} \textbf{Simplicial symmetric monoidal model categories.} Let $\M$ be a a simplicial symmetric monoidal model category in which all objects are fibrant. For instance, $\M$ can be the category of compactly generated pointed topological spaces, $\Top$, with the smash product.

\begin{proposition}\label{topologicalSpaces} Every simplicial symmetric monoidal model category in which all objects are fibrant is a monoidal descent category.
\end{proposition}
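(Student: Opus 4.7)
The plan is to take as simple functor the Bousfield--Kan totalization
$$\simple X^\bullet = \int_{[n] \in \Simpl} X^n \pitchfork \Delta[n],$$
where $\pitchfork$ denotes the simplicial cotensor. Since every object of $\M$ is fibrant and $\Delta[\bullet]$ is Reedy cofibrant, this formula computes the homotopy limit of $X^\bullet$ and is already known to endow $\M$, together with its class of weak equivalences $\mathrm{E}$, with the structure of a descent category whose structural natural transformations $\mu$ and $\lambda$ are the canonical comparison maps for iterated ends and for constant cosimplicial objects. Axiom (M1) is hypothesis, so the work is to verify (M2) and (M3).

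For (M2), I would construct the Künneth morphism by exploiting the compatibility between the simplicial enrichment and the tensor product. In a simplicial symmetric monoidal model category, for $A,B\in\M$ and $K,L\in \mathbf{sSet}$ there is a natural transformation
$$(A\pitchfork K)\otimes (B\pitchfork L)\longrightarrow (A\otimes B)\pitchfork (K\times L),$$
adjoint to the evident pairing of evaluations. Precomposing with the diagonal $\Delta[n]\to\Delta[n]\times\Delta[n]$, one obtains, naturally in $[n]$, a morphism $(X^n \pitchfork \Delta[n])\otimes (Y^n \pitchfork \Delta[n])\longrightarrow (X^n\otimes Y^n)\pitchfork \Delta[n]$, and the universal property of ends then delivers the Künneth morphism
$$\kappa:\simple X^\bullet\otimes\simple Y^\bullet\longrightarrow \simple(X^\bullet\otimes Y^\bullet),$$
together with the unit $\eta:\unidad\to\simple (c\unidad)$ induced by $\unidad\to\unidad\pitchfork\Delta[n]$. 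Symmetry, associativity and unit coherence for $\kappa$ then reduce, via Yoneda on the end variable, to the matching coherences inside $(\M,\otimes)$, $(\mathbf{sSet},\times)$ and their compatibility with the cotensor $\pitchfork$, all of which follow from $\M$ being simplicial symmetric monoidal in a compatible way.

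For (M3), I would observe that both $\mu$ and $\lambda$ are assembled from universal morphisms among ends and constant diagrams: $\lambda_A$ is built out of the unit maps $A\to A\pitchfork\Delta[n]$, while $\mu$ is the Fubini-type comparison for iterated totalizations. Each of these building blocks commutes with $\otimes$ by the same enriched-monoidal compatibility used to define $\kappa$, so the diagrams expressing the monoidality of $\mu$ and $\lambda$ reduce once more to the interplay between $\otimes$ and $\pitchfork$ and are therefore satisfied.

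The main obstacle I expect is coherence bookkeeping rather than any genuine homotopical content: one has to write $\kappa$, $\eta$, $\mu$ and $\lambda$ explicitly as morphisms of ends, spell out the hexagon, triangle and naturality squares, and verify each by reducing --- through the universal property of the end --- to instances of coherence already holding in $\M$ by assumption. The cleanest way to organize this is to exhibit the end functor $\int_{\Simpl}:\M^{\Simpl}\to\M$ itself as lax symmetric monoidal with respect to the pointwise monoidal structure on cosimplicial diagrams; once this is in place, (M2) and (M3) both become instances of monoidal naturality of universal constructions.
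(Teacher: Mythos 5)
Your proposal follows essentially the same route as the paper: there too the simple functor is the Bousfield--Kan homotopy limit written as an end of cotensors (with $\mathrm{N}(\Simpl\downarrow n)$ in place of $\Delta[n]$), the K\"unneth morphism is the composite of the lax monoidality of ends, the pairing $(A\pitchfork K)\otimes (B\pitchfork L)\to (A\otimes B)\pitchfork (K\times L)$ coming from the compatibility of the tensor product with the simplicial action, and the diagonal of the indexing simplicial set, while $\mu$ and $\lambda$ arise from $\Simpl\to\Simpl\times\Simpl$ (Fubini) and $\Simpl\to\ast$. The one caveat is that the descent structure you invoke as ``already known'' is established in the literature for the cotensor by $\mathrm{N}(\Simpl\downarrow\bullet)$ rather than by $\Delta[\bullet]$ (levelwise fibrancy alone does not make $\mathrm{Tot}$ compute $\holim$), but this substitution does not affect the monoidality argument, which is the actual content of the proposition.
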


\begin{proof} By \cite[3.2]{Ro1}, $\mc{M}_f = \M$ is a descent category where $\mrm{E}$ is the class of weak equivalences of
$\mc{M}$ and the simple functor is the Bousfield-Kan \cite{BK} homotopy limit, $\holim :\Dl\mc{M}_f\longrightarrow\mc{M}_f$
(see also \cite{Hir}). Given a cosimplicial object $X$,
$$\holim X=\int_n (X^n)^{\mrm{N}(\Dl\downarrow n)} \ ,$$
where $\mrm{N}(\Dl\downarrow n)$ is the nerve of the over-category $(\Dl\downarrow n)$
and $(X^m)^{\mrm{N}(\Dl\downarrow n)}$ is constructed using the simplicial structure on $\mc{M}$.

Recall also that $\mu$ and $\lambda$ are easily defined using the fact that a functor
$F:\mc{B}\longrightarrow\mc{C}$ induces a natural map
$
\holim_{\mc{C}}X\longrightarrow\holim_{\mc{B}}F^\ast X  = \int_b X(F(b))^{\mrm{N}(\mc{B}\downarrow b)}
$
which is defined by the maps $X(F(b))^{\mrm{N}(\mc{C}\downarrow F(b))}\longrightarrow X(F(b))^{\mrm{N}(\mc{B}\downarrow b)}$, induced by
${F}:(\mc{B}\downarrow b)\longrightarrow (\mc{C}\downarrow F(b))$. Then:

\begin{itemize}
\item[$\bullet$ ] $\mu$ is obtained from the diagonal $d:\Dl\longrightarrow \Dl\times\Dl$, that induces for each
$Z\in\Dl\Dl\mc{M}_f$
$$\holim_{\Dl}\holim_{\Dl} Z \simeq \holim_{\Dl\times\Dl}Z\longrightarrow\holim_{\Dl} d^\ast Z = \holim_{\Dl}\mrm{D}Z \ ,$$
where the first isomorphism follows from the Fubini property of $\holim$ (see \cite[XI.4.3]{BK}).
\item[$\bullet$ ] $\lambda$ is obtained from $l:\Dl\longrightarrow \ast$. It induces $\lambda_A:A\simeq\holim_{\ast}A\longrightarrow \holim_{\Dl}cA$ for each
$A\in\mc{M}_f$.
\end{itemize}

As for the K\"unneth morphism for  $\holim$,  let $X$, $Y$ be cosimplicial objects in $\M$.
Since all ends are (lax) monoidal functors, in particular there is a canonical morphism
$e:\left(\int_n (X^n)^{\mrm{N}(\Dl\downarrow n)}\right)\otimes \left(\int_n (Y^n)^{\mrm{N}(\Dl\downarrow n)}\right)
\longrightarrow \int_n (X^n)^{\mrm{N}(\Dl\downarrow n)}\otimes (Y^n)^{\mrm{N}(\Dl\downarrow n)}$. The diagonal embedding $\mrm{N}(\Dl\downarrow n) \longrightarrow \mrm{N}(\Dl\downarrow n)\times \mrm{N}(\Dl\downarrow n)$
produces the natural morphisms $d^n : (X^n\otimes Y^n)^{\mrm{N}(\Dl\downarrow n)\times \mrm{N}(\Dl\downarrow n)}\longrightarrow (X^n\otimes Y^n)^{\mrm{N}(\Dl\downarrow n)}$
which assemble into a natural morphism $d = \int_n d^n $ between the corresponding ends. Hence, there are natural morphisms

$$
f^n : (X^n)^{\mrm{N}(\Dl\downarrow n)}\otimes (Y^n)^{\mrm{N}(\Dl\downarrow n)}\longrightarrow (X^n\otimes Y^n)^{\mrm{N}(\Dl\downarrow n)\times \mrm{N}(\Dl\downarrow n)}
$$

coming from the compatibility between the tensor product and the simplicial action, that assemble into $f = \int_n f^n$. And the K\"{u}nneth morphism for $\holim$ is the composition $ d\circ f\circ e : \holim X \otimes \holim Y\longrightarrow \holim X\otimes Y$. The fact that $\holim$, $\mu$ and $\lambda$ are monoidal follows directly from their constructions.
\end{proof}
\end{example}

\begin{remark}
The category  $\mathbf{C}^{\geq 0}({\A})$  of non-negatively graded cochain complexes over a symmetric monoidal abelian category $\A$ is symmetric monoidal with the usual tensor product of complexes. It also has a descent structure in which the simple
functor is the total complex of the double complex obtained through the Moore functor
$M:\Dl\mathbf{C}^{\geq 0}({\A})\longrightarrow \mathbf{C}^{\geq 0}\mathbf{C}^{\geq 0}({\A})$ (see \cite[3.4]{Ro1}). However, it is \textit{not} a symmetric monoidal descent category. The reason is that the K\"{u}nneth morphism for the simple functor involves the Alexander-Whitney map and hence is not symmetric, but just up to homotopy. We know two ways to circumvent this problem:

\begin{itemize}
\item[(a)] replacing the category of cochain complexes $\mathbf{C}^{\geq 0}({\A})$  with the equivalent one of cosimplicial objects $\Dl\A$ (see Example (\ref{cosimplicialsymdescent}) below).
\item[(b)] restricting ourselves to cochain complexes of vector spaces over a field of characteristic zero (see Example (\ref{simpleTW}) below).
\end{itemize}
\end{remark}

\begin{example}\label{cosimplicialsymdescent}\textbf{Cosimplicial objects.} Let $\A$ be a symmetric monoidal abelian category and $\Dl\A$ its category of cosimplicial objects, with the degree-wise monoidal structure. $\Dl\A$ supports a natural descent structure defined as follows.

\begin{itemize}
 \item The weak equivalences are the inverse image under the Moore complex functor $M:\Dl \mc{A}\longrightarrow \mathbf{C}^{\geq 0}(\mc{A})$ of the quasi-isomorphisms (cohomology isomorphisms) in $\mathbf{C}^{\geq 0}(\mc{A})$.
 \item The simple functor is the diagonal $\mrm{D}:\Simpl\Simpl\A\longrightarrow \Simpl\A$.
 \item $\mu$ and $\lambda$ are the identity natural transformations.
\end{itemize}

All the axioms of descent category (\cite{RR}, definition (2.1)) are easy consequences of the definitions, except (S4) which follows from the dual of the Eilenberg-Zilber-Cartier theorem (\cite[2.9]{DP}). The simple functor is clearly symmetric monoidal, and $\mu$ and $\lambda$ are monoidal. Consequently, $\Dl\A$ is a monoidal descent category.  By the Dold-Kan correspondence, it is equivalent to $\mathbf{C}^{\geq 0}({\A})$ as an abelian category and also as a descent one, because the simple functors commute with the equivalence of categories.
\end{example}

\begin{example}\label{simpleTW}\textbf{Bounded complexes of vector spaces.}   Let $\mk$ be a field of characteristic zero. The category $\Cochainsk$ of non-negatively graded cochain complexes of $\mk$-vector spaces is a monoidal descent category, taking as equivalences the \quis\ and as simple functor the \textit{Thom-Whitney simple} $\simple_{\TW} : \Simpl\Cochainsk \longrightarrow
\Cochainsk$ (\cite{N}, cf \cite{HS}), whose definition we proceed to recall.

Let $\mrm{L}^*_\bullet = \left\{ L^*_p\right\}_{p\geq 0}$ be the
simplicial commutative dg algebra which in simplicial degree $p$ is the algebra of polynomial differential forms on the hyperplane $\sum_{k=0}^p x_k=0$ of the affine space $\mathbb{A}^{p+1}_\mk$:

$$
{L}^*_p = \displaystyle\frac{\Lambda(x_0,\ldots ,x_p,dx_0,\ldots,dx_p)}{\left( \sum x_i -1 ,\, \sum dx_i\right)} \ ,
$$

where $\Lambda(x_0,\ldots ,x_n,dx_0,\ldots,dx_p)$ is the free commutative dg algebra generated by $\{x_k\}_{k=0,\dots ,n}$ in degree 0 and by $\{dx_k\}_{k=0,\dots , n}$ in degree 1. By forgetting the multiplicative structure, $\mrm{L}_{\bullet}^{*} =
\left\{ L_p^*\right\}_{p\geq 0}$ can also be considered as a simplicial cochain complex. For any $V^{\bullet,*}$ in  $\Delta\Cochainsk$, the {\it Thom-Whitney simple\/} functor of $V^{\bullet,*}$ is the end of the bifunctor $\mathrm{L}_\bullet^* \otimes V^{\bullet,*} : \Simpl^{\op}_e \times
\Simpl_e \longrightarrow \Cochainsk$, $ ([p], [q]) \mapsto {L}^*_p\otimes V^{q,*}$,

$$
\simple_{\TW} (V^{\bullet,*}) = \int_p
L_p^* \otimes V^{p,*} \quad .
$$

Here $\Simpl_e$ means the \emph{strict} simplicial category. This is a monoidal functor: the K\"{u}nneth morphisms $\kappa_{VW} : \simple_{\TW}(V) \otimes \simple_{\TW}(W) \longrightarrow \simple_{\TW} (V\otimes W)$ are induced by the maps $ \kappa_p : ({L}_p^*\otimes
V^{p,*}) \otimes ({L}_p^*\otimes W^{p,*}) \longrightarrow
({L}_p^*\otimes (V^{p,*} \otimes W^{p,*} ))$ defined as $
\kappa_p ((\nu \otimes v)\otimes (\xi \otimes w)) = (-1)^{\vert
v \vert \cdot \vert \nu\vert} \nu\xi \otimes (v\otimes w)$. Symmetry follows from the commutativity of the product in ${L}_p^*$.

As for $\mu$ and $\lambda$, we have:

\begin{itemize}
 \item[$\bullet$] If $Z^{\bullet,\bullet,*}\in\Dl\Dl\Cochainsk$,
$\mu_{TW\, Z^{\bullet,\bullet,*}}:\mbf{s}_{TW}\mbf{s}_{TW}Z^{\bullet,\bullet,*}\longrightarrow
\mbf{s}_{TW}\mrm{D}Z^{\bullet,\bullet,*}=\int_p Z^{p,p,*}\otimes L^{*}_p$ is induced by the morphisms
$\xymatrix@M=4pt@H=4pt{ Z^{p,p,*}\otimes  L^{*}_p\otimes L^{*}_p \ar[r]^-{\id\otimes\tau_p}& Z^{p,p,*}\otimes L^{*}_p}$
where $\tau_p$ is the product.
 \item[$\bullet$] If
$A^*\in\Cochainsk$, the morphisms $A^* \longrightarrow A^* \otimes
 L^{*}_n$; $a\mapsto a\otimes 1$ give rise to
$\lambda_{TW\, A^*}:A^*\longrightarrow\mbf{s}_{TW}cA^*$.
\end{itemize}

To prove that this is a descent category one can consider the descent structure on $\Cochainsk$ in which the simple functor $\mbf{s}$ is induced by the total complex, and then use the fact that integration of forms yields a quasi-isomorphism of functors $\mbf{s}_{TW}\longrightarrow\mbf{s}$ (\cite{N}, \cite{HS}).
\end{example}

\begin{example}\label{ExampleFilteredComplexes}\textbf{Filtered bounded complexes of vector spaces.} Let $(V,\mrm{F})$ be a filtered positive complex of $\mk$-vector spaces. Any simplicial decreasing filtration $\varepsilon$ of $\mrm{L}_{\bullet}^*$ induces a natural filtration on $\mbf{s}_{TW}(V)$, defined on ${L}_n^{*}\otimes V^{m,*}$ by $\sum_{i+j=k}\varepsilon^i L_n^{*} \otimes \mrm{F}^jV^{m,*}$ ( \cite[(6.2)]{N}). For $r\geq 0$, we can take $\varepsilon$ to be the multiplicative filtration in which the $x_i$ have weight $0$ and the $dx_i$'s
have weight $r$. This produces the functor

$$
(\mbf{s}_{TW},\sigma_r) :\Dl \mbf{F}\Cochainsk\longrightarrow \mbf{F}\Cochainsk \ , \quad \quad  (\mbf{s}_{TW},\sigma_r)(V,\mrm{F})=(\mbf{s}_{TW}(V),\sigma_r(\mrm{F}))\ ,
$$

where
$$
\sigma_r(\mrm{F})^k(\mbf{s}_{TW}(V)^n) = \ds\bigoplus_{i+j=n}  \int_p
L^i_p \otimes \mrm{F}^{k-ri}V^{p,j} \ .
$$

Together with the natural transformations $\lambda$ and $\mu$ defined at the level of complexes as the ones for $\mbf{s}_{TW}$,
$(\mbf{s}_{TW},\sigma_r)$ endows $(\mbf{F}\Cochainsk,\mrm{E}_r)$ with a descent structure. Here $\mrm{E}_r$ is the class of
$E_r$-quasi-isomorphisms: morphisms inducing isomorphism at the $E_{r+1}$-stages of the associated spectral sequences. In particular,
$\mrm{E_0}$ is the class of graded quasi-isomorphisms.

This may be proved as follows. By \cite[2.1.12]{RR} $(\mbf{F}\Cochainsk,\mrm{E}_0)$ has a descent structure
in which the simple functor $(\mbf{s},\delta_0)$ is induced by the total complex.
The case $r=0$ is a consequence of the result \cite[(6.3)]{N}, stating that integration of forms over
the $p$-simplices yields a graded natural quasi-isomorphism $(\mbf{s}_{TW},\sigma_0)\longrightarrow (\mbf{s},\delta_0)$.
To see the general case, apply inductively the transfer lemma (2.3) \cite{RR} to the decalage filtration functor and use the fact that
$Dec \circ (\mbf{s}_{TW},\sigma_{r+1})= (\mbf{s}_{TW},\sigma_r)\circ Dec$.

Recall that $\mbf{F}\Cochainsk$ is in addition a symmetric monoidal category, with
$(\mbf{F}\otimes \mbf{G})^k (V\otimes W)^n = \oplus_{i+j=n} \sum_{s+t=k} \mbf{F}^s V^i \otimes \mbf{G}^t W^j$.
Then $(\mbf{s}_{TW},\sigma_r)$, $\lambda$ and $\mu$ are, as in the non-filtered case, symmetric monoidal. Therefore
$(\mbf{F}\Cochainsk,\mrm{E}_r)$ is a monoidal descent category.
\end{example}

\subsection{Operads}

In this section, we show how we transfer the descent structure from $\D$ to the category of operads $\Op^\D$.

\subsubsection{} Let $\Sigma$ denote the {\it symmetric groupoid\/}. The category of contravariant functors from $\Sigma$ to a category $\mathcal{C}$ is called the category of $\Sigma$-{\it modules\/} and is denoted by $\sMod^{\mathcal{C}}$. We identify its objects with sequences of objects in $\mathcal{C}$, $E=\left(E(l)\right)_{l\geq 1}$, with a right $\Sigma_l$-action on each $E(l)$.

Let $(\mathcal{C}, \otimes, \mathbf{1})$ be a symmetric monoidal category. A {\it unital $\Sigma$-operad\/} (an {\it operad\/} for short) in $\mathcal{C}$ is a $\Sigma$-module $ P$ together with a family of \textit{structure morphisms} $\gamma^P_{l;m_1,\dots ,m_l} : P(l)\otimes P(m_1) \otimes \dots \otimes P(m_l)
\longrightarrow P(m)$, with $m = m_1 + \cdots + m_l$, and unit $\eta : \mathbf{1} \longrightarrow P(1)$, satisfying constraints of equivariance, associativity, and unit (see \cite{MSS}). Let us denote by $\Op^{\mathcal{C}}$ the category of operads in $\C$. If $\C$ is a \textit{closed} symmetric monoidal category with internal hom $[- \ , -]$, for each object $X \in \C$ we have its \textit{operad of endomorphisms}, $\Endop_X (l) = [X^{\otimes l}, X]\ , l\geq 0$.

\subsubsection{} So, first of all we have:

\begin{proposition}\label{OperadsSonDeDescenso}
For every monoidal descent category $(\D,\mrm{E})$  its category of operads $\Op^\D$ has a natural descent structure, defined arity-wise.
\end{proposition}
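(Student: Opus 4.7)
The plan is to transfer every piece of the descent structure of $\D$ to $\Op^\D$ arity by arity, and then to verify that the transferred data still assembles into operads (rather than merely $\Sigma$-modules) using the monoidal structure of $(\simple,\mu,\lambda)$.

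First I would set up the data. I declare a morphism of operads $f:P\longrightarrow Q$ to be an equivalence in $\Op^\D$ exactly when $f(l):P(l)\longrightarrow Q(l)$ lies in $\mrm{E}$ for every $l\geq 1$; call this class $\mrm{E}^{\Op}$. For a cosimplicial operad $P^\bullet\in\Simpl\Op^\D$, I set $(\simple P^\bullet)(l):=\simple(P^\bullet(l))$, with the obvious $\Sigma_l$-action, and I let $\mu$ and $\lambda$ for $\Op^\D$ be defined arity-wise by the $\mu$ and $\lambda$ of $\D$. The only nontrivial construction is the operad structure on $\simple P^\bullet$: for each multi-index $(l;m_1,\dots,m_l)$ I build $\gamma^{\simple P^\bullet}$ as the composite
\[
\simple P^\bullet(l)\otimes\simple P^\bullet(m_1)\otimes\cdots\otimes\simple P^\bullet(m_l)
\xrightarrow{\;\kappa\;}
\simple\bigl(P^\bullet(l)\otimes P^\bullet(m_1)\otimes\cdots\otimes P^\bullet(m_l)\bigr)
\xrightarrow{\;\simple(\gamma^{P^\bullet})\;}
\simple P^\bullet(m),
\]
where $\kappa$ is the iterated Künneth morphism of the lax symmetric monoidal functor $\simple$, and the unit is $\mathbf{1}\xrightarrow{\eta}\simple(c\mathbf{1})\xrightarrow{\simple c(\eta^{P^\bullet})}\simple P^\bullet(1)$.

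The second step is to check that this makes $\simple P^\bullet$ an operad: the associativity, unit and equivariance axioms for $\gamma^{\simple P^\bullet}$ follow by pasting the coherence diagrams of $\simple$ as a lax symmetric monoidal functor (compatibility of $\kappa$ and $\eta$ with the associator, left/right unitors and braiding) with the operad axioms for $P^\bullet$, which hold cosimplicial degree-wise in $\D$. Naturality in $P^\bullet$ and compatibility with morphisms of cosimplicial operads are automatic. For $\mu$ and $\lambda$: axiom (M3) of \ref{MonoidalDescentDefi} says that the components $\mu$ and $\lambda$ in $\D$ are monoidal natural transformations, hence they commute with $\kappa$ and $\eta$; this is exactly what is needed for $\mu^{\Op}$ and $\lambda^{\Op}$, defined arity-wise, to be morphisms of operads.

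For the third step I verify the axioms of descent category for $(\Op^\D,\mrm{E}^{\Op},\simple,\mu,\lambda)$. Because $\mrm{E}^{\Op}$, $\simple$, $\mu$ and $\lambda$ are all defined arity-wise, and because the arity-wise forgetful functor $\Op^\D\longrightarrow\sMod^\D\longrightarrow\D^{\mathbb{N}}$ detects equivalences and commutes with $\simple$, $\mu$ and $\lambda$ as just constructed, each axiom (closure of $\mrm{E}^{\Op}$ under composition and 2-out-of-3; $\simple$ sending level-wise equivalences to equivalences; $\lambda_P\in\mrm{E}^{\Op}$; $\mu_P\in\mrm{E}^{\Op}$; the coherence pentagon for $\mu$ and $\lambda$; and the Eilenberg--Zilber-type axiom (S4)) reduces for each arity $l$ to the corresponding axiom for the descent category $(\D,\mrm{E})$.

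The point I expect to demand the most care is the second step: verifying that $\gamma^{\simple P^\bullet}$ satisfies the operad associativity axiom. This is the diagram whose two sides, after inserting the $\kappa$'s, involve different bracketings of the Künneth morphism applied to an $(l+1)$-fold tensor product; closing it up requires one use of the coherence of $\kappa$ with the associator and one use of the operad associativity of $P^\bullet$ inside $\simple$. Once this diagram and its equivariance and unit companions have been checked, the remainder of the proposition is a routine arity-wise translation and the result follows.
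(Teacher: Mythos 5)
Your proposal is correct and follows essentially the same route as the paper: equivalences, $\simple$, $\mu$ and $\lambda$ are all defined arity-wise, the operad structure on $\simple P^\bullet$ comes from the K\"unneth morphism of the lax symmetric monoidal functor $\simple$, and condition (M3) guarantees that $\mu$ and $\lambda$ are morphisms of operads. The only cosmetic difference is that where you verify the descent axioms one by one via the arity-wise forgetful functor, the paper packages this final step by observing that $\sMod^{\D}$ is trivially a descent category and invoking the transfer lemma (2.3) of \cite{RR} for $\psi:\Op^\D\longrightarrow\sMod^{\D}$.
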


\begin{proof} Define equivalences arity-wise:

$$
\Op^\mathrm{E} = \{ f: P \longrightarrow Q \ \mid \ f(l) : P(l) \longrightarrow Q(l) \in \mathrm{E}\ \mbox{for all} \ l \geq 1\}\, .
$$

Every monoidal functor $F : \D \longrightarrow \D'$ induces a functor between the categories of operads arity-wise as well: $\Op^F : \Op^\C \longrightarrow \Op^{\D'}$, which we still write $F$, $(FP)(l) = F(P(l))$; and every monoidal natural transformation $\tau : F \longrightarrow G$ induces a natural transformation $\Op^\tau : \Op^F \longrightarrow \Op^G$, which we still write $\tau$, $\tau_P(l) = \tau_{P(l)} $. Thus, since $\Simpl \Op^\D = \Op^{\Simpl \D}$, we have $\Op^\simple, \Op^\mu , \Op^\lambda$ defined for $\Op^\D$. Which endow $\Op^\D$ with a descent structure. Indeed, $\sMod^{\mathcal{D}}$ is trivially a descent category with the arity-wise structure. Finally, since all the ingredients $\Op^\simple,\Op^\mathrm{E} ,\Op^\mu, \Op^\lambda$ are defined arity-wise, the forgetful functor $\psi : \Op^\D\longrightarrow \sMod^{\mathcal{D}}$ trivially satisfies the hypotheses of the transfer lemma \cite{RR}, (2.3).
\end{proof}

\begin{example} The categories of topological operads $\Op^{\Top}$, cosimplicial operads $\Op^{\Dl \A}$, dg operads $\Op^{\Cochainsk}$ and filtered dg operads $\Op^{\mbf{F}\Cochainsk}$ are descent categories.
\end{example}

\subsection{Algebras over variable operads}

On the one hand, depending on your interests, the category of operad algebras over {\it all\/} operads might be a natural place to derive functors defined on operad algebras; for instance, if you need to take into account the failure of exactitude over the coefficient operad too. On the other hand, the case of algebras over variable operads is slightly easier than that of algebras over a fixed operad. So we start endowing the former with a descent structure.

Let $P$ be an operad in a symmetric monoidal category $\C$. An {\it operad algebra\/} is an object $A \in \C$ together with a family of $\Sigma_l$-equivariant morphisms in $\C$, $ \widehat{\alpha}_A(l): P(l)\otimes A^{\otimes l} \longrightarrow A$, subject to natural associativity and unit constraints. Equivalently, if $\C$ is a closed symmetric monoidal category, it is a morphism of operads $\alpha : P \longrightarrow \Endop_A$. We will write $\Alg_P^\C$ the category of algebras over the \emph{fixed} operad $P$.

Every morphism of operads in $\C$, $f: P \longrightarrow Q$ induces a {\it reciprocal image functor\/} between the categories of $Q$-algebras and $P$-algebras $f^* : \Alg^\C_Q \longrightarrow \Alg^\C_P$ defined on objects by the compositions $\widehat{\alpha}_B (l) = \widehat{\beta} (l) \circ (f(l)\otimes \id)$.

Loosely speaking, the category of algebras over {\it all\/} operads is the \lq union' of all $\left\{ \Alg^\C_P \right\}_{P\in \Op^\D}$. Specifically, let $\Alg^\C$ denote the category whose objects are pairs $ (P,A) $, where
$P$ is an operad in $\C$ and $A$ is a $P$-algebra. Morphisms are also couples $ (f, \varphi) : (P,A)\longrightarrow (Q,B)$, in which $f: P \longrightarrow Q$ is a morphism of operads and $\varphi : A \longrightarrow f^*(B)$ is a morphism of $P$-algebras. Composition of morphisms is done in the obvious way. $\Alg^\C$ is a {\it fibered\/} category (\cite{SGA1}) over the category of operads $\Op^\C$.

\begin{proposition}\label{AlgebrasSonDeDescenso} For every monoidal descent category $(\D,\mrm{E})$  its category of algebras over all operads $\Alg^\D$ has a natural descent structure which agrees with the one on $\D$ by forgetting the algebra structure.
\end{proposition}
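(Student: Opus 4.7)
The strategy is to transport the descent structure from $\D$ and from $\Op^\D$ (Proposition \ref{OperadsSonDeDescenso}) to the total category $\Alg^\D$. I would declare a morphism $(f,\varphi):(P,A)\to (Q,B)$ to be an equivalence precisely when $f\in\Op^{\mathrm{E}}$ is arity-wise an equivalence of operads and $\varphi\in\mathrm{E}$ is an equivalence in $\D$. This class visibly restricts to $\mathrm{E}$ under the forgetful functor $U:\Alg^\D\to\D$, $(P,A)\mapsto A$, so the final compatibility claim in the statement is automatic.

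Next, the simple functor. Given a cosimplicial object $(P^{\bullet},A^{\bullet})\in\Simpl\Alg^\D$, set $\simple^{\Alg}(P^{\bullet},A^{\bullet}) = (\simple P^{\bullet},\simple A^{\bullet})$, where $\simple P^{\bullet}$ carries the operad structure of Proposition \ref{OperadsSonDeDescenso} and the $\simple P^{\bullet}$-algebra structure on $\simple A^{\bullet}$ is defined by the composite
\[
\simple P^{\bullet}(l)\otimes (\simple A^{\bullet})^{\otimes l}
\xrightarrow{\kappa}
\simple\bigl(P^{\bullet}(l)\otimes (A^{\bullet})^{\otimes l}\bigr)
\xrightarrow{\simple \widehat{\alpha}(l)}
\simple A^{\bullet},
\]
with $\kappa$ the iterated Künneth morphism from (M2) and $\widehat{\alpha}(l)$ the levelwise algebra action. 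Symmetry of $\kappa$ gives $\Sigma_l$-equivariance, while the monoidal coherence of $\simple$ combined with the $P^{n}$-algebra axioms at each cosimplicial level $n$ provides associativity and unit. The natural transformations $\mu^{\Alg}$ and $\lambda^{\Alg}$ are defined componentwise from $\mu$ and $\lambda$; axiom (M3) ensures that these underlying morphisms in $\D$ and in $\Op^\D$ are in fact morphisms in $\Alg^\D$, i.e.\ they respect the just-constructed actions.

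To verify the descent axioms (S1)--(S5) I would invoke the transfer lemma \cite{RR}, (2.3), applied to the forgetful functor $\psi:\Alg^\D\longrightarrow \sMod^\D\times\D$, $(P,A)\mapsto (P,A)$, whose target is a descent category with the arity-wise / componentwise structure built from $\D$. Since $\psi$ creates the equivalences, and the simple functor $\simple^{\Alg}$ together with $\mu^{\Alg},\lambda^{\Alg}$ has been built precisely so that $\psi$ commutes with them, the axioms reduce to those holding in $\sMod^\D\times\D$, where they are immediate.

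The step I expect to be the real obstacle is the coherence in the construction of $\simple^{\Alg}$: checking that the structure morphisms just defined actually make $\simple A^{\bullet}$ into a $\simple P^{\bullet}$-algebra. The associativity pentagon involves both the operad composition $\gamma^{\simple P^{\bullet}}$, which is itself built via $\kappa$ in Proposition \ref{OperadsSonDeDescenso}, and the Künneth morphisms appearing in $\widehat{\alpha}(l)$ above; its commutativity is a diagram chase combining the hexagonal coherence of the symmetric monoidal functor $\simple$ with the fact that the data $\{\widehat{\alpha}^{n}(l)\}_{n}$ are compatible with the cosimplicial structure on $(P^{\bullet},A^{\bullet})$. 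Once this verification is performed, the remaining axioms follow formally from the transfer lemma.
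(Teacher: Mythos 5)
Your proposal is correct and follows essentially the same route as the paper: component-wise equivalences, the simple functor induced by the (lax symmetric) monoidality of $\simple$ via the identification $\Simpl\Alg^\D=\Alg^{\Simpl\D}$, and the transfer lemma of \cite{RR} applied to a forgetful functor (the paper uses $\psi:\Alg^\D\to\Op^\D\times\D$ rather than $\sMod^\D\times\D$, an immaterial difference since equivalences in $\Op^\D$ are themselves arity-wise). The coherence verification you flag as the main obstacle is exactly what the paper subsumes under the standard fact that a monoidal functor and a monoidal natural transformation induce a functor, resp.\ natural transformation, on categories of operad algebras.
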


\begin{proof} Define equivalences component-wise:

$$
\Alg^\mathrm{E} = \{ (f,\varphi ): (P,A) \longrightarrow (Q,B) \ \mid \ f \in \Op^\mathrm{E} \ \mbox{and}\ \varphi \in \mathrm{E} \}\ .
$$

Again, every monoidal functor $F : \D \longrightarrow \D'$ induces a functor between the categories of operad algebras $\Alg^F : \Alg^\D \longrightarrow \Alg^{\D'}$, which we still write $F$, defined on objects by $F(P,A) = (FP,FA)$, and every monoidal natural transformation $\tau : F \longrightarrow G$ induces a natural transformation $\Alg^\tau : \Alg^F \longrightarrow \Alg^G$, which we still write $\tau$, defined by $\tau_{(P,A)} = (\tau_P, \tau_A) $. So, because $\Simpl \Alg^\D = \Alg^{\Simpl \D}$, we have $\Alg^\simple, \Alg^\mu , \Alg^\lambda$ defined for $\Alg^\D$.

These data endow $\Alg^\D$ with a descent structure. Indeed, $\Op^\D\times \D$ is a descent category with the obvious product structure. Since all data are already defined via the forgetful functor $\psi : \Alg^\D \longrightarrow \Op^\D \times \D$, $\psi$ trivially satisfies the hypotheses of the transfer lemma \cite{RR} (2.3).
\end{proof}

\begin{examples}  $\Alg^{\Top}$,  $\Alg^{\Dl \A}$, $\Alg^{\Cochainsk}$ and $\Alg^{\mbf{F}\Cochainsk}$ are descent categories.
\end{examples}

\subsection{Algebras over a fixed operad}

For algebras over a fixed operad, $\Alg_P^\D$, we need the following

\begin{remark}\label{obviousremark} Let $P\in \Op^\D$ be a fixed operad. Every monoidal functor $F: \D \longrightarrow \D'$ induces a functor between the categories of operad algebras $\Alg^F_P : \Alg^\D_P \longrightarrow \Alg^{\D'}_{FP}$, which we still write $F$. Of course it is still true that every monoidal natural transformation $\tau : F \Longrightarrow G$ induces a natural transformation between the induced functors on algebras, but since both have different ranges, perhaps it is worth to explain what we mean by this. Note that $\tau_P : FP \longrightarrow GP$ is a morphism of operads. So we have a reciprocal image functor $\tau_P^* : \Alg_{GP}^{\D'} \longrightarrow\Alg_{FP}^{\D'}$, and again, the monoidal natural transformation $\tau $ induces a natural transformation $\Alg_P^\tau:  F \Longrightarrow \tau_P^* \circ G$, which we still write simply $\tau$, because, forgetting the operad algebra structures, it is the same original $\tau$.
$$
\xymatrix{
{\Alg^\D_P} \ar[r]^F \ar[d]_-G & {\Alg^{\D'}_{FP}} \\
{\Alg^{\D'}_{GP}} \ar[ur]_{\tau_P^*}
}
$$
\end{remark}

That said, the proof for algebras over a fixed operad goes along the same lines as the ones for operads and algebras over variable operads (\ref{OperadsSonDeDescenso}) and (\ref{AlgebrasSonDeDescenso}).

\begin{proposition}\label{PAlgebrasSonDeDescenso} For every operad $P \in \Op^\D$, $\Alg_P^\D$ has a natural descent structure which agrees with the descent one on $\D$ by forgetting the algebra structure.
\end{proposition}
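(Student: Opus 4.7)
The plan is to transfer the descent structure of $\D$ through the forgetful functor $\psi: \Alg_P^\D \to \D$, exactly as was done for $\Op^\D$ in (\ref{OperadsSonDeDescenso}) and for $\Alg^\D$ in (\ref{AlgebrasSonDeDescenso}), but now incorporating the twist described in Remark (\ref{obviousremark}) that is needed precisely because $P$ is fixed. As equivalences I take the morphisms of $P$-algebras whose underlying morphism in $\D$ is an equivalence:
$$\Alg_P^{\mathrm{E}} \;=\; \{\varphi : A \to B \mid \psi(\varphi) \in \mathrm{E}\}.$$

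The key construction is that of the simple functor. A cosimplicial $P$-algebra is the same data as an algebra, in $\Simpl\D$, over the constant cosimplicial operad $cP \in \Op^{\Simpl\D}$; i.e. $\Simpl\Alg_P^\D = \Alg_{cP}^{\Simpl\D}$. Applying Remark (\ref{obviousremark}) to the monoidal functor $\simple : \Simpl\D \to \D$ at the operad $cP$ yields $\Alg^{\simple}_{cP} : \Simpl\Alg_P^\D \to \Alg_{\simple(cP)}^\D$. Since $\lambda$ is monoidal by axiom (M3), $\lambda_P : P \to \simple(cP)$ is a morphism of operads, and composing with its reciprocal image functor yields the required endofunctor
$$\simple_P \;:=\; \lambda_P^{*} \circ \Alg^{\simple}_{cP} : \Simpl\Alg_P^\D \longrightarrow \Alg_P^\D,$$
whose underlying object coincides with $\simple$ of the underlying cosimplicial object in $\D$; that is, $\psi \circ \simple_P = \simple \circ \Simpl\psi$.

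The natural transformations $\lambda$ and $\mu$ for $\Alg_P^\D$ are obtained from the second half of Remark (\ref{obviousremark}): the monoidality of $\lambda : \id_\D \to \simple c$ produces $\Alg_P^\lambda : \id_{\Alg_P^\D} \Rightarrow \lambda_P^{*}\circ \Alg^{\simple}_{cP}\circ c = \simple_P \circ c$, and the monoidality of $\mu$ yields an analogous $\Alg_P^\mu : \simple_P\simple_P \Rightarrow \simple_P \mathrm{D}$. Both are compatible with $\psi$ on the nose, by construction.

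Finally I would invoke the transfer lemma \cite{RR}, (2.3), applied to $\psi: \Alg_P^\D \to \D$: since equivalences, the simple functor, and the natural transformations $\lambda, \mu$ are all defined so that $\psi$ commutes with them strictly, the descent axioms descend from $(\D,\mathrm{E})$ to $\Alg_P^\D$ automatically, and the agreement with the descent structure of $\D$ via the forgetful functor is built in. The only real subtlety is the one highlighted in Remark (\ref{obviousremark}): naively applying $\simple$ to a cosimplicial $P$-algebra produces an $\simple(cP)$-algebra rather than a $P$-algebra, and this mismatch is exactly what the reciprocal image along $\lambda_P$ is designed to correct. Once that manoeuvre is in place, the verification is routine.
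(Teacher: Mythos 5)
Your proposal is correct and follows essentially the same route as the paper: the same class of equivalences, the identification $\Simpl\Alg_P^\D = \Alg_{cP}^{\Simpl\D}$, the simple functor $\simple_P = \lambda_P^*\circ \Alg^{\simple}_{cP}$ corrected by the reciprocal image along $\lambda_P$, the transformations $\mu_P,\lambda_P$ obtained from Remark (\ref{obviousremark}), and the transfer lemma of \cite{RR} applied to the forgetful functor. No gaps; if anything you spell out the monoidality of $\mu$ and $\lambda$ slightly more explicitly than the paper does.
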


\begin{proof} Take as equivalences

$$
 \Alg^\mathrm{E}_P = \{ \varphi : A \longrightarrow B \ \mid \mbox{the underlying morphism of $\varphi$ in $\D$ belongs to } \mathrm{E} \}\ .
$$

A cosimplicial $P$-algebra in $\D$ is the same as an ordinary $cP$-algebra in $\Dl \D$; that is, an algebra over the constant cosimplicial operad $cP$. So our simple functor induces a functor between the categories of algebras $\Alg^\simple_{cP} : \Delta\Alg^\D_P = \Alg^{\Delta\D}_{cP}\longrightarrow \Alg^\D_{\simple (cP)} $, which we compose with the reciprocal image functor induced by the morphism of operads $\lambda_P : P \longrightarrow \simple (cP)$ to obtain the simple functor for $P$-algebras $\simple_P  = \lambda_P^* \circ \Alg^\simple_{cP} : \Delta\Alg^\D_P \longrightarrow \Alg_P^\D$.

Our previous remarks \ref{obviousremark} provide us with natural transformations $\mu_P$ and $\lambda_P$ as well. Again, since all data
$ \Alg^\mathrm{E}_P, \simple_P, \mu_P$ and $\lambda_P$ are defined through the forgetful functor $\psi : \Alg_P^\D \longrightarrow \D$,
they satisfy trivially the hypotheses of the transfer lemma \cite{RR} (2.3).
\end{proof}

\begin{examples} For $P \in \Op^\Top, \Op^{\Dl \A}, \Op^{\Cochainsk}$ and $\Op^{\mbf{F}\Cochainsk}$, the respective $P$-algebras $\Alg^{\Top}_P, \Alg^{\Dl \A}_P, \Alg^{\Cochainsk}_P$ and $\Alg^{\mbf{F}\Cochainsk}_P$ are descent categories.
\end{examples}

\section{Products in sheaf cohomology}

\subsection{Sheaves of operads and operad algebras}

\subsubsection{} We are now ready to endow sheaves of operads and operad algebras with Cartan-Eilenberg structures and obtain, in particular, the derived functors of direct images and sheaf cohomology therein.

\begin{hypotheses}\label{hipotesis11} For that, we need the Grothendieck site $\X$ and the descent category of coefficients $\D$ to verify the hypotheses (\ref{hipotesis1}) of our main theorem (\ref{maintheoremRR}) and to add a new one (G3) that makes filtered colimits a monoidal functor (see \cite[1.2.2]{F}):

\begin{enumerate}
\item[(G0)] $\X$ is a Grothendieck site with a set $X$ of enough points.
\item[(G1)] $\D$ is closed under filtered colimits and arbitrary products.
\item[(G2)] Filtered colimits and arbitrary products in $\D$ are $\mathrm{E}$-exact.
\item[(G3)] For every object $A \in \D$, $A\otimes- :\D\longrightarrow \D$ preserves filtered colimits.
\end{enumerate}

\end{hypotheses}

With (G3) in hand we can show that also $\Op^{\D}$, $\Alg^\D$ and $\Alg^\D_P$ satisfy hypotheses (G1) and (G2). Namely,

\begin{lemma}\label{lemaHipotesisAlg} Let $\D$ be a descent category satisfying hypotheses (\ref{hipotesis11}). Then

\begin{enumerate}
\item Products and filtered colimits exist in $\Op^{\D}$ and are defined arity-wise.
\item Products and filtered colimits exist in $\Alg^\D$ and are preserved by the forgetful functor $\Alg^\D\longrightarrow \Op^{\D}\times\D$.
\item Products and filtered colimits exist in  $\Alg^\D_P$ and are preserved by the forgetful functor $U : \Alg^\D\longrightarrow \D$, for every operad $P$.
\end{enumerate}

In particular, $\Op^{\D}$, $\Alg^\D$ and $\Alg^\D_P$ satisfy hypotheses (G1) and (G2).
\end{lemma}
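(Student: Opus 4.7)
The plan is to establish everything at once by the same device: construct the products and filtered colimits arity-wise (resp.\ component-wise) in $\D$, and then lift the operad, resp.\ algebra, structure across the appropriate forgetful functor. Existence of arity-wise products and filtered colimits in $\sMod^\D$ follows directly from (G1), so the real content is showing that the resulting $\Sigma$-module naturally inherits the relevant structural morphisms; once this is done, (G1) for the new categories is immediate, and (G2) follows because equivalences were defined arity-wise, and filtered colimits and products in $\D$ are $\mathrm{E}$-exact by (G2) for $\D$.

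For products, since $-\otimes-$ on $\D$ is not required to commute with products, one must argue a bit: a product $\prod_i P_i$ of operads is defined by $(\prod_i P_i)(l)=\prod_i P_i(l)$, and the composition $\gamma$ comes from projecting to each factor and applying $\gamma^{P_i}$,
\[
\textstyle\prod_i P_i(l) \otimes \prod_i P_i(m_1)\otimes \cdots \otimes \prod_i P_i(m_l)
\longrightarrow \prod_i \bigl( P_i(l)\otimes P_i(m_1)\otimes \cdots \otimes P_i(m_l)\bigr)
\xrightarrow{\prod \gamma^{P_i}} \prod_i P_i(m),
\]
with the unit assembled analogously. The universal property in $\Op^\D$ then reduces arity-wise to the universal property in $\D$. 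The same recipe handles products of $P$-algebras and of objects of $\Alg^\D$ (fixing the operad factor as a product of operads).

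The main obstacle, and the only place where (G3) is essential, is filtered colimits. Given a filtered diagram $\{P_i\}$ of operads, define $P(l)=\dirlim_i P_i(l)$ arity-wise in $\D$. To produce a structural morphism $\gamma^P$ one needs an isomorphism
\[
\textstyle\dirlim_i P_i(l)\otimes \dirlim_i P_i(m_1)\otimes \cdots \otimes \dirlim_i P_i(m_l)
\;\cong\; \dirlim_i \bigl(P_i(l)\otimes P_i(m_1)\otimes \cdots \otimes P_i(m_l)\bigr).
\]
By (G3), $A\otimes -$ preserves filtered colimits, and by symmetry so does $-\otimes A$. Iterating this in each tensor slot, and then collapsing the resulting $(l{+}1)$-fold filtered colimit to the diagonal via the Fubini-type property of filtered colimits over the same filtered index category, yields the desired isomorphism. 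One then transports $\gamma^{P_i}$ and $\eta^{P_i}$ through it to obtain $\gamma^P$ and $\eta^P$, and checks the operad axioms either arity-wise or by the universal property. The analogous argument works for $\Alg^\D$ (moving both the operad part and the algebra part through their respective colimits) and for $\Alg^\D_P$ (where the operad is fixed and one only needs $P(l)\otimes (\dirlim A_i)^{\otimes l}\cong \dirlim_i P(l)\otimes A_i^{\otimes l}$, which is again (G3) applied slot by slot).

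Finally, since the forgetful functors $\Op^\D\to \sMod^\D$, $\Alg^\D\to \Op^\D\times\D$ and $\Alg^\D_P\to \D$ create the limits and colimits just built, properties (G1) and (G2) transfer at once: the ambient categories satisfy (G1), and an arrow in $\Op^{\mathrm{E}}$, $\Alg^{\mathrm{E}}$ or $\Alg^{\mathrm{E}}_P$ is by definition an equivalence on underlying data, so the $\mathrm{E}$-exactness of filtered colimits and arbitrary products in $\D$ hypothesized by (G2) for $\D$ delivers (G2) for each of these categories.
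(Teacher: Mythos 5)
Your proposal is correct and follows the same route as the paper, which simply cites the standard facts (Fresse, \emph{Modules over Operads and Functors}, 3.1.6 and 3.3.1) that limits of operads and algebras are created arity-wise, respectively by forgetting structure, and that the same holds for filtered colimits when the tensor product preserves them, i.e.\ under (G3). You supply the details behind that citation --- the lax monoidality of products and the diagonal/Fubini argument for iterated filtered colimits --- and your deduction of (G1) and (G2) from the arity-wise definitions matches the paper's.
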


\begin{proof} It is a standard fact that limits of operads (resp. algebras) are created arity-wise (resp. by forgetting the algebra structure), and that the same is true for filtered colimits if (G3) holds (see, for instance, \cite[3.1.6, 3.3.1]{F}). Then \textit{(1)}, \textit{(2)} and \textit{(3)} hold. Finally, since filtered colimits, products and weak equivalences in $\Op^{\D}$ (resp. $\Alg^\D$, $\Alg^\D_P$) are defined arity-wise (resp. by forgetting the algebra structure), then (G2) also holds for $\Op^{\D}$ (resp. $\Alg^\D$, $\Alg^\D_P$).
\end{proof}

As a consequence, we have all the necessary ingredients to extend our main theorem \ref{maintheoremRR} to the categories of operads and operad algebras $\E = \Op^{\D}, \Alg^\D$ and $\Alg^\D_P$:

\begin{enumerate}
\item We can define the class of strong $\Scal$ and weak $\W$ equivalences as follows: a morphism $\varphi \in \Sheaf (\X, \D)$ is in $\Scal$ if and only if $\varphi(l)(U) \in \mathrm{E}$ for all objects $U\in \X$ and arities $l$, in the case of operads; and $\varphi (U) \in \mathrm{E}$ in the case of algebras. As for $\W$ we ask $\varphi(l)_x \in \mathrm{E}$ for all points $x\in X$ and arities $l$, in the case of operads; and $\varphi_x\in \mathrm{E}$ in the case of algebras. Obviously $\Scal \subset \W$, since it is true arity-wise for operads, or by forgetting the algebra structure for algebras.
\item We can define Godement cosimplicial resolutions $G^\bullet : \Sh{\X}{\E}\longrightarrow \Dl \Sh{\X}{\E}$ and hypercohomology sheaves: arity-wise for operads $\left(\mbb{H}_{\X} (\mc{P})\right)(l) = \mbb{H}_{\X} (\mc{P}(l)) $, or just as plain sheaf $\mbb{H}_{\X} (\A)$ for algebras, and the algebra structure open-wise for both.
\end{enumerate}

\begin{remark} Perhaps this second point deserves further elaboration: how multiplicative structures are transferred from sheaves of operads and algebras to their Godement resolutions and hypercohomology sheaves, even though we cannot say right now that the Godement resolution is a monoidal functor.

Recall that the Godement resolution $G^\bullet : \Sheaf (\X, \D) \longrightarrow \Delta \Sheaf (\X, \D)$ is defined as $G^p(\F) = T^{p+1}(\F)$, where $T: \Sheaf (\X, \D) \longrightarrow \Sheaf (\X, \D)$ is the triple associated to the pair of adjoint functors $p^*\F =  (\F_x)_{x\in X} $ and $ p_*D = \prod_{x\in X} x_* (D_x)$. Here, for each point $x\in X$, the fibre $x^*$ and skyscraper $x_*$ are the couple of adjoint functors which can be computed as:

$$x^*\F = \F_x = \dirlim_{(U,u)} \F (U) \ \qquad \text{and} \qquad \ (x_* D ) (U) = \prod_{u \in x^*(\y U)} D_u \ ,
$$

(see \cite{RR} for details). Let's illustrate the transfer of multiplicative structures with the case of monoids $\Mon{\D}$ of a symmetric monoidal category $\D$ satisfying (G1) and (G3): let $\A \in \Sheaf (\X, \Mon{\D})$ be a sheaf of monoids. In particular, it is a functor $\A : \X^\op \longrightarrow \Mon{\D}$. Hence, for each object $U\in \X$, we have a product $m(U) : \A(U)\otimes \A (U) \longrightarrow \A(U)$. Now, because of (G3), those $m(U)$ induce products on the fibers $\A_x \otimes \A_x \stackrel{\kappa_x}{\longrightarrow} (\A\otimes\A)_x \stackrel{x^*(m)}{\longrightarrow} \A_x$, where $\kappa_x$ is the Künneth morphism of $\dirlim$. Also, since Cartesian products are always monoidal, for every monoid $m: D \otimes D \longrightarrow D$ in $\D$, we have naturally induced products on the skyscraper sheaves $(x_*D)(U) \otimes (x_*D) (U) \longrightarrow (x_*D) (U)$. Clearly these products pass to functors $p^*$ and $p_*$ recalled above and therefore to $T$ and $G^\bullet$ \emph{object-wise}. Finally, if $\simple$ is a monoidal functor, since it's defined object-wise for sheaves $\simple (\F)(U) = \simple (\F(U))$, we will also have an object-wise induced product on the hypercohomology sheaf $\mbb{H}_{\X}$.

Much in the same way, for sheaves of operads $\Pcal$ and operad algebras $\A$ we obtain induced object-wise structure morphisms $ \Pcal(l)(U)\otimes \Pcal(m_1) (U)\otimes \dots \otimes \Pcal(m_l)(U) \longrightarrow \Pcal(m)(U)$ and $ \Pcal(l)(U)\otimes \A(U)^{\otimes l} \longrightarrow \A(U)$.
\end{remark}

\subsubsection{} With no extra work, we can prove now a multiplicative version of our main theorem (\ref{maintheoremRR}), transferring the compatibility between $(\X, \D)$ to $(\X, \E)$:

\begin{theorem}\label{hacesoperadyalgopsonCE} For every compatible pair $(\X, \D)$ of a Grothendieck site and a monoidal descent category satisfying hypotheses $(\ref{hipotesis1})$, $\Op^{\D}$, ${\Alg^\D}$ and ${\Alg^\D_P}$ are also compatible with $\X$.
\end{theorem}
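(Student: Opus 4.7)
The plan is to reduce everything to the main theorem \ref{maintheoremRR} by verifying one of its equivalent conditions, and the cleanest one to check is condition (3): the simple functor commutes with stalks up to equivalences. Writing $\E$ for any of $\Op^{\D}$, $\Alg^\D$ or $\Alg^\D_P$, the overall strategy is to exploit the fact that, by construction, the descent structure on $\E$ and the sheaf-theoretic ingredients (equivalences, Godement functor $G^\bullet$, and the simple functor applied to $G^\bullet$) have all been defined either arity-wise or through the forgetful functor to $\D$. So the condition in (3) for $(\X,\E)$ literally reduces to the same condition for $(\X,\D)$, which we have by hypothesis.

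First I would check that the hypotheses (\ref{hipotesis1}) are satisfied for $\E$: (G0) is a property of the site and is inherited for free, and (G1)--(G2) are exactly the content of Lemma \ref{lemaHipotesisAlg}. In particular, the hypercohomology sheaf $\mbb{H}_{\X}(\F)=\simple G^\bullet\F$ is defined in $\Sh{\X}{\E}$, and the discussion in the preceding subsection shows how it inherits the operadic/algebraic structure from $\F$.

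Next I would verify condition (3) of \ref{maintheoremRR}. For $\Pcal \in \Sh{\X}{\Op^\D}$ and a point $x$, both $\mbb{H}_{\X}(\Pcal)$ and $\Pcal_x$ are computed arity-wise; since $\simple$ on $\Op^\D$, $\mrm E$ on $\Op^\D$, fibres on $\Sh{\X}{\Op^\D}$, and the Godement functor (by the remark preceding the theorem) are all defined arity-wise, one has a canonical identification
\[
(\simple G^\bullet \Pcal)_x(l) \;=\; (\simple G^\bullet \Pcal(l))_x
\quad\text{and}\quad
\simple(G^\bullet \Pcal_x)(l) \;=\; \simple(G^\bullet \Pcal(l)_x),
\]
and the required arrow reduces arity by arity to the corresponding arrow in $\D$, which is in $\mathrm{E}$ by compatibility of $(\X,\D)$. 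The argument for $\Alg^\D$ is identical, applied to the two components $(P,A)$ via the forgetful functor $\psi:\Alg^\D\longrightarrow \Op^\D\times\D$ which preserves fibres (by Lemma \ref{lemaHipotesisAlg}), equivalences, and the simple functor.

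The step I expect to require some care is $\Alg^\D_P$, because the simple functor there is defined as $\simple_P=\lambda_P^*\circ \Alg^\simple_{cP}$ and involves a reciprocal image along $\lambda_P:P\longrightarrow \simple(cP)$, so it is not, strictly speaking, applied arity-wise. The way around this is to observe that the reciprocal image $\lambda_P^*$ only changes the algebra structure and leaves the underlying object in $\D$ untouched; since the equivalences in $\Alg^\D_P$ are detected by the forgetful functor $U:\Alg^\D_P\longrightarrow \D$, and $U\circ\simple_P = \simple\circ U^\Delta$, $U\circ G^\bullet=G^\bullet\circ U$, $U(\F_x)=(U\F)_x$, the morphism $(\simple G^\bullet\F)_x\longrightarrow \simple(G^\bullet\F_x)$ in $\Alg^\D_P$ has underlying morphism the one in $\D$ for the sheaf $U\F\in \Sh{\X}{\D}$, which is in $\mathrm{E}$ by compatibility of $(\X,\D)$. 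Hence condition (3) of \ref{maintheoremRR} holds for $(\X,\E)$ in all three cases, and $(\X,\E)$ is compatible.
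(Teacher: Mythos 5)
Your proposal is correct and follows essentially the same route as the paper: verify hypotheses (\ref{hipotesis1}) via Lemma \ref{lemaHipotesisAlg} and then check condition (3) of Theorem \ref{maintheoremRR} arity-wise for operads and through the forgetful functor for algebras. Your extra care with $\Alg^\D_P$ --- observing that $\lambda_P^*$ leaves the underlying object in $\D$ unchanged so the stalk-comparison map is literally the one in $\D$ --- is just a spelled-out version of the paper's remark that the canonical map for algebras ``is just the same map as objects of $\D$.''
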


\begin{proof} We have already seen that $\Op^\D, \Alg^\D$ and $\Alg^\D_P$ are descent categories (see propositions (\ref{OperadsSonDeDescenso}), (\ref{AlgebrasSonDeDescenso}) and (\ref{PAlgebrasSonDeDescenso})). We have just shown also (\ref{lemaHipotesisAlg}) that they verify hypotheses (\ref{hipotesis1}). So, according to our main theorem (\ref{maintheoremRR}), it suffices to prove that the simple functor weakly commutes with stalks.

But this is clear for operads, since for every cosimplicial sheaf of operads $\Pcal^\bullet$ the canonical map $\simple (\Pcal^\bullet)_x \longrightarrow \simple(\Pcal^\bullet_x) $ is just $\simple (\Pcal(l)^\bullet)_x \longrightarrow \simple(\Pcal(l)^\bullet_x) $ in arity $l$, and this one belongs to $\mrm{E}$ because, by assumption, $\D$ is compatible with $\X$. And it's even clearer for algebras because this canonical map is just the same map as objects of $\D$.
\end{proof}

\begin{examples}\label{maintheorem2} The following categories are compatible with any site $\X$:
\begin{enumerate}
\item $\Op^{\Dl \A}$, $\Alg^{\Dl \A}$ and $\Alg^{\Dl \A}_P$ for any $P\in \Op^{\Dl \A} $. Here $\A$ is a symmetric monoidal abelian category satisfying $($AB4$)^{\ast}$ and $($AB5$)$.
\item $\Op^{\Cochainsk}$, $\Alg^{\Cochainsk}$ and $\Alg^{\Cochainsk}_P$ for any $P\in \Op^{\Cochainsk} $.
\item Their filtered versions $\Op^{\mbf{F}\Cochainsk}$, $\Alg^{\mbf{F}\Cochainsk}$ and $\Alg^{\mbf{F}\Cochainsk}_P$ for any $P\in \Op^{\mbf{F}\Cochainsk}$.
\end{enumerate}
\end{examples}

\begin{example} $\Op^{\Top}$, $\Alg^{\Top}$ and $\Alg^{\Top}_P$ are compatible with any site $\X$ of finite cohomological dimension for any $P\in \Op^{\Top}$. Examples of sites of finite cohomological dimension include the small Zariski site of a noetherian topological space of finite Krull dimension, the big Zariski site of a noetherian scheme $X$ of finite Krull dimension, or the small site of a topological manifold of finite dimension. The fact that $\Top$ is compatible with such $\X$ may be proved combining the following three facts: 1) pointed simplicial sets are compatible with $\X$ (\cite{RR}), 2) weak equivalences in $\Top$ are created by the singular chain functor $S : \Top \longrightarrow \Sset$ and 3) $S$ preserves $\holim$, products and filtered colimits.
\end{example}

So, for derived functors we have as in \cite{RR} corollary (4.4.1),

\begin{corollary}\label{existenciaderivadoimagenesdirectas} Let $f: \X \longrightarrow \Y$ be a continuous functor of Grothendieck sites and $(\D, \mathrm{E})$ a descent category compatible with the site $\X$ satisfying hypotheses (\ref{hipotesis11}). Then, for $\E = \Op^{\D}, \Alg^\D$, and $\Alg^\D_P$, the direct image functor $f_* : \Sh{\X}{\E} \longrightarrow \Sh{\Y}{\E}$ admits a right derived functor $ \rdf f_* : \Sh{\X}{\E}[\W^{-1}] \longrightarrow \Sh{\Y}{\E}[\W^{-1}]$ given by $\rdf f_* (\F) = f_* \mbb{H}_{\X} (\F)$.

Also, for any object $U\in \X$, the section functor $\Gamma (U, -) : \Sh{\X}{\E}
\longrightarrow \E$ admits a right derived functor $\rdf \Gamma (U,-)  : \Sh{\X}{\E}[\W^{-1}] \longrightarrow \E [\mathrm{E}^{-1}]$ given by $\rdf \Gamma (U,\F) = \Gamma (U,\mbb{H}_{\X} (\F))$. In particular, if $\X$ has a final object $X$, sheaf cohomology can be computed as $\rdf \Gamma (X, \F ) =  \Gamma (X, \mathbb{H}_{\X}(\F))$.
\end{corollary}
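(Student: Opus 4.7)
The plan is to reduce the corollary to the abstract Cartan--Eilenberg machinery, invoking Theorem \ref{hacesoperadyalgopsonCE} as the essential input. For $\E=\Op^\D, \Alg^\D$ or $\Alg^\D_P$, Theorem \ref{hacesoperadyalgopsonCE} tells us that $(\X,\E)$ is a compatible pair, so $(\Sh{\X}{\E},\Scal,\W)$ is a right Cartan--Eilenberg category with resolvent functor $(\mbb{H}_\X,\rho)$: for every sheaf $\F$, the natural arrow $\rho_\F:\F\longrightarrow \mbb{H}_\X(\F)$ is a CE-fibrant model. By the general theory of CE-categories (see the list of properties recalled just after Theorem \ref{hacesoperadyalgopsonCE}, and \cite{GNPR}), any functor defined on $\Sh{\X}{\E}$ that sends strong equivalences between CE-fibrant objects to equivalences in its target admits a right derived functor, computed by precomposition with $\mbb{H}_\X$. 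So the proof reduces to checking that $f_*$ and $\Gamma(U,-)$ satisfy this mild condition.

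First I would verify the condition for $\Gamma(U,-):\Sh{\X}{\E}\longrightarrow \E$. This functor is simply evaluation at $U\in\X$, and the class $\Scal$ of strong equivalences in $\Sh{\X}{\E}$ is by construction defined object-wise via $\mathrm{E}$ (arity-wise in the operad case, or by forgetting the algebra structure in the algebra case). Hence $\Gamma(U,-)$ sends $\Scal$ directly into the class of equivalences in $\E$ used to form $\E[\mathrm{E}^{-1}]$, and consequently $\rdf\Gamma(U,-)$ exists and is given by $\F\mapsto \Gamma(U,\mbb{H}_\X(\F))$. The final statement about sheaf cohomology $\rdf\Gamma(X,\F)=\Gamma(X,\mbb{H}_\X(\F))$ is just the special case $U=X$.

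Next I would treat $f_*$. For a continuous functor $f:\X\to\Y$ of Grothendieck sites, the direct image of a sheaf is again computed object-wise on $\Y$ by evaluation of the source sheaf on the associated object of $\X$ (no sheafification is required in the direct image direction). Since $\Scal$ is an object-wise class in both $\Sh{\X}{\E}$ and $\Sh{\Y}{\E}$, the functor $f_*$ sends $\Scal$ into $\Scal\subset\W$. This provides the hypothesis needed to derive $f_*$ via the CE structure, and yields $\rdf f_*(\F)=f_*\mbb{H}_\X(\F)$.

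The main point is simply that once Theorem \ref{hacesoperadyalgopsonCE} is in hand, the existence and explicit form of the derived functors follow formally from the general CE-category formalism, exactly as in \cite[Corollary 4.4.1]{RR}. So I do not expect any essential obstacle beyond routine bookkeeping: the hard work of intertwining the topology of $\X$ with the descent and monoidal structure on $\D$ has already been absorbed in the compatibility of $(\X,\E)$.
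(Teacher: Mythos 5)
Your proposal is correct and follows essentially the same route as the paper, which states the corollary as a direct consequence of Theorem \ref{hacesoperadyalgopsonCE} combined with the general Cartan--Eilenberg formalism, ``as in \cite{RR} corollary (4.4.1)''. The details you supply --- that $\Scal$ is object-wise, so $f_*$ and $\Gamma(U,-)$ send strong equivalences to equivalences and hence can be derived by precomposition with $\mbb{H}_\X$ --- are exactly the bookkeeping the paper delegates to the cited corollary of \cite{RR}.
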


\begin{examples}\label{NH} (\cite{N}, \cite{Hin}, \cite{Be}) For instance, we can take $\D = \Cochainsk$. So sheaves of commutative or Lie algebras,
$\Sh{\X}{\Alg^{\Cochainsk}_{\Com}}$ and $\Sh{\X}{\Alg^{\Cochainsk}_{\Lie}}$ form CE-categories and for any sheaf of commutative $\A$ or Lie $\Lcal$ dg algebras, we have in particular sheaf cohomologies $\rdf \Gamma (X, \A) $ and
$\rdf \Gamma (X, \Lcal)$, which are commutative and Lie dg algebras, respectively. The first agrees with the \emph{Thom-Whitney} derived functor $\rdf \Gamma_{\TW} (X, \A)$ introduced by
V. Navarro in \cite{N} (where the filtered case is also considered).

A particular example of $\rdf \Gamma (X, \Lcal)$ is the dg Lie algebra which governs formal deformations of sheaves of operad dg algebras $\A$ on a site, $T_\A$, called the \textit{global tangent Lie algebra} in \cite{Hin}. It turns out that $T_\A = \rdf \Gamma (\X, \T_\A)$, where $\T_\A$ is a sheaf of dg algebras of derivations of a model of $\A$.

In \cite{Be}, sheaf cohomology $\rdf \Gamma (X, \Lcal)$ is defined using pro-hypercoverings and  the main result in algebraic deformation quantization \cite{Ye}, theorem (0.2), is written as a \quis\ between such Lie dg algebras \cite{Be}, theorem (1.1).
\end{examples}

\subsubsection{} In the next sections we develop three variants of the results we have obtained, but so far not covered. Namely, we examine

\begin{enumerate}
\item Sheaves of algebras over a \emph{fixed sheaf} of operads.
\item Symmetric spectra.
\item The case where an associated sheaf functor is available.
\end{enumerate}

We will first elaborate briefly why these three cases deserve particular treatment.

First of all, our main result (\ref{maintheorem2}) states that, under some mild hypotheses, categories of sheaves $\Sheaf (\X, \E)$ for $\E = \Op^{\D}, \Alg^\D$ and $\Alg^\D_P$ have a CE-structure; in particular, for algebras the categories of sheaves of operad algebras over \emph{variable sheaves} of operads $\Sheaf (\X, \Alg^\D)$ and over a \emph{fixed}, \emph{constant} operad $P\in \Op^\D$, $\Sheaf (\X, \Alg^\D_P)$. But the category of operad algebra sheaves over a \emph{fixed sheaf} of operads $\Pcal \in \Sheaf (\X, \Op^\D)$ is none of these, so \cite{KM} would not be covered by our construction. Therefore we need to develop this case independently.

Secondly, sheaves of symmetric spectra (and hence operads and algebras over them) are not covered  by theorem (\ref{maintheorem2}) either, because cartesian products in $\Sp^{\Sigma}$ are not $\mathrm{E}$-exact \cite[3.8]{S} as our hypothesis (G2) requires. To avoid this inconvenience, we might restrict ourselves to the subcategory ${\Omega}\Sp^{\Sigma}$ of symmetric $\Omega$-spectra, in which cartesian products do preserve weak equivalences. But then we would not have a monoidal category, because the smash product of two $\Omega$-spectra is not necessarily an $\Omega$-spectrum (\cite[1.4]{HSS}). However, this kind of difficulty can be easily overcome by taking the best of both worlds: the exactitude of products in ${\Omega}\Sp^{\Sigma}$ and the monoidal structure in $\Sp^{\Sigma}$. This works because, in order to talk about operads in ${\Omega}\Sp^{\Sigma}$, for instance, we actually \emph{don't} need the smash product of two omega spectra to be an omega spectrum, but only its existence as a plain spectrun. In such a case, we will say that ${\Omega}\Sp^{\Sigma}$ is a monoidal descent \emph{subcategory}.

Thirdly, in the presence of an associated sheaf functor, everything can be more easily stated: for instance, we will be able to say simply that the Godement resolution is a monoidal functor and forget about working with products object-wise, as we have been doing so far.

\subsection{Sheaves of algebras over a fixed sheaf of operads}\label{fijo} We now turn to the study of sheaves of algebras over a \textit{fixed} sheaf of operads.

Let $\Pcal \in \Sheaf (\X, \Op^\D)$ be one such a sheaf. Notice that we cannot define a (sheaf) of $\Pcal$-algebras outright as a sheaf $\A \in \Sheaf (\X, \D)$ together with some structure morphisms $\widehat{\alpha}_\A(l) : \Pcal(l) \otimes \A^{\otimes l} \longrightarrow \A$ because, in the absence of an associate sheaf functor, we don't have tensor products of sheaves. Nevertheless we can go on again object-wise: it always make sense to talk about \emph{presheaves} of algebras over $\Pcal$. By definition a presheaf of $\Pcal$-algebras is a presheaf $\A \in \PrSheaf (\X, \D)$ together with object-wise structure morphisms $ \widehat{\alpha}_{\A}(l)(U):
\mc{P}(l)(U)\otimes \A(U)^{\otimes l} \longrightarrow \A(U)$ natural in $U$ making $\A(U)$ a $\mc{P}(U)$-algebra for every object $U\in\X$. Let us write $\Alg^{\PrSh{\X}{\D}}_{\mc{P}}$ for the category of presheaves of $\Pcal$-algebras.

\begin{definition}\label{defiAlgSinMonoidal}
Given a sheaf of operads $\mc{P}\in\Sh{\X}{\Op^\D}$, a \textit{sheaf of $\mc{P}$-algebras} is a sheaf $\A\in\Sh{\X}{\D}$ such that, as a presheaf, $\A\in\Alg^{\PrSh{\X}{\D}}_{\mc{P}}$. Let us write $\Alg_{\mc{P}}^{\Sh{\X}{\D}}$ for the full subcategory of $\Alg^{\PrSh{\X}{\D}}_{\mc{P}}$ of sheaves $\mc{P}$-algebras.
\end{definition}

\begin{remark} This notation $\Alg_{\mc{P}}^{\Sh{\X}{\D}}$, with $\Pcal \in \Sheaf (\X, \Op^\D)$, is somewhat misleading since it is reminiscent of the previously used  $\Alg^\D_P$, where $\D$ is a (closed) symmetric monoidal category and $P \in \Op^\D$. In the absence of an associated  sheaf functor, we cannot say that $\Sheaf (\X, \D)$ is such a category, but  remark (\ref{hacesmonoidal}) should convince the reader that, in the end, it will turn out to be a coherent notation.
\end{remark}

\begin{example}\label{KM} (\cite{KM}) Let $X$ be a smooth complex algebraic variety of finite dimension and $ \Pcal_X (n)$ denote the sheaf of $n$-linear multidifferential operators. For instance, $\Pcal_X (1)$ is the usual sheaf of linear differential operators on $X$. $\Pcal_X = \left(\Pcal_X(n)\right)_{n\geq 0}$ is a sheaf of operads on $X$, with the composition given by superposition of multilinear differential operators. So we have $\Pcal_X \in \Sheaf (X, \Op^\D)$, with $\D =  \mathbf{C}^{\geq 0}(\mathbb{C})$. Then, a system of partial differential equations on $X$ (polynomial in the derivatives) is a sheaf $\A$ of $\Pcal_X$-algebras, $\A \in \Alg_{\Pcal}^{\Sh{\X}{\D}}$, and a solution to such a system is a morphism of $\Pcal_X$-algebras $\A \longrightarrow \Ocal_X$.
\end{example}

For $\Alg_{\mc{P}}^{\Sh{\X}{\D}}$ we have the same ingredients as for the categories of our main theorem (\ref{maintheorem2}), namely, the Godement resolution and classes of strong and weak equivalences. As for the first, the Godement resolution and the hypercohomology sheaf of $\Sh{\X}{\D}$ induce resolution and hypercohomolgy for $\Alg^{\Sh{\X}{\D}}_{\mc{P}}$ in a natural way.

\begin{lemma}\label{lemaCEAlgP}
Let $\D$ be a monoidal descent category satisfying hypotheses $(\ref{hipotesis1})$. Then, for a sheaf of $\mc{P}$-algebras $\A$, we have:
\begin{itemize}
 \item[(1)] the hypercohomology sheaf $\mbb{H}_{\X} \A$ is a sheaf of $\mc{P}$-algebras, and
 \item[(2)] the morphism $\rho_{\A}: \A \longrightarrow \mbb{H}_{\X} \A$ is a morphism of $\mc{P}$-algebras.
\end{itemize}
\end{lemma}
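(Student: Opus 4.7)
The plan is to lift the Godement triple $T$, and hence the cosimplicial resolution $G^\bullet$, to the category of sheaves of $\mc{P}$-algebras, and then to apply the monoidal simple functor section-wise, mirroring the construction of $\simple_P$ in Proposition (\ref{PAlgebrasSonDeDescenso}). Throughout, one exploits that the structure of a sheaf of $\mc{P}$-algebras is, by Definition (\ref{defiAlgSinMonoidal}), entirely object-wise.

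First, I would verify that if $\A$ is a sheaf of $\mc{P}$-algebras, then $T\A=p_*p^*\A$ is again a sheaf of $\mc{P}$-algebras, and that both the unit $\A\to T\A$ and the multiplication $T^2\A\to T\A$ of the triple are morphisms of sheaves of $\mc{P}$-algebras. Since
$$T\A(U) = \prod_{x \in X} \prod_{u \in x^*(\y U)} \A_x,$$
the universal property of products reduces this to equipping each stalk $\A_x$ with a $\mc{P}(U)$-algebra structure, naturally in the pair $(U,u)$ with $u\in x^*(\y U)$. For this, one passes the object-wise structure morphisms $\widehat{\alpha}_\A(l)(V):\mc{P}(l)(V)\otimes\A(V)^{\otimes l}\to\A(V)$ to the filtered colimit over neighborhoods $V$ of $u$ (using that tensor commutes with filtered colimits in each variable), obtaining a $\mc{P}_x$-algebra structure on $\A_x$, which is then restricted along $\mc{P}(l)(U)\to\mc{P}(l)_x$ to a $\mc{P}(U)$-algebra structure. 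Naturality in $U$, and the fact that the unit and multiplication of $T$ respect these structures, are direct checks from the universal properties of products and colimits.

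Iterating, $G^\bullet\A$ is a cosimplicial sheaf of $\mc{P}$-algebras, so $G^\bullet\A(U)$ is a cosimplicial $\mc{P}(U)$-algebra, equivalently an algebra over the constant cosimplicial operad $c\mc{P}(U)$ in $\Simpl\D$. By the monoidality of $\simple$ (axiom (M2)), $(\mbb{H}_{\X}\A)(U)=\simple(G^\bullet\A(U))$ inherits a $\simple(c\mc{P}(U))$-algebra structure, which is restricted to a $\mc{P}(U)$-algebra along the monoidal morphism $\lambda_{\mc{P}(U)}:\mc{P}(U)\to\simple(c\mc{P}(U))$, exactly as in the proof of Proposition (\ref{PAlgebrasSonDeDescenso}). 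Naturality in $U$ is inherited from the previous step together with the naturality of $\lambda$, so $\mbb{H}_{\X}\A$ acquires the structure of a presheaf of $\mc{P}$-algebras; since it is also a sheaf by Theorem (\ref{maintheoremRR}), Definition (\ref{defiAlgSinMonoidal}) gives (1). For (2), decompose $\rho_\A:\A\to\mbb{H}_{\X}\A$ as $\lambda_\A:\A\to\simple(c\A)$ followed by $\simple$ applied to the cosimplicial morphism $c\A\to G^\bullet\A$ coming from the unit of $T$; both factors are morphisms of sheaves of $\mc{P}$-algebras by the monoidality of $\lambda$ and the first step, so $\rho_\A$ is too.

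The main obstacle is the first step: transporting the $\mc{P}$-algebra structure through the filtered colimits (stalks) and arbitrary products (skyscrapers) that make up $T$. This demands a careful interplay between the monoidal structure of $\D$ and these limit operations, the key ingredient being the commutation of tensor with filtered colimits in each variable, which installs a well-defined $\mc{P}_x$-algebra structure on each stalk $\A_x$. Once this is in place, the rest of the argument is a routine descent-theoretic lifting along the monoidal natural transformations $\lambda$ and $\mu$.
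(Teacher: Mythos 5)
Your proof is correct and follows essentially the same route as the paper's: both arguments rest on the facts that the Godement triple is built only from cartesian products and filtered colimits (lax monoidal thanks to (G3)), that $\simple$, $\lambda$ and the canonical augmentation $\id\longrightarrow G^\bullet$ are monoidal, and that the resulting algebra structure is then restricted back to $\mc{P}$. The only difference is bookkeeping: you pin the operad to $\mc{P}$ at every stage (restricting along $\mc{P}(l)(U)\longrightarrow\mc{P}(l)_x$ at the stalk level and along $\lambda_{\mc{P}(U)}$ after applying $\simple$), whereas the paper lets the operad vary --- $G^\bullet\A$ is a $G^\bullet\mc{P}$-algebra and $\mbb{H}_{\X}\A$ an $\mbb{H}_{\X}\mc{P}$-algebra --- and restricts once at the end along $\rho_{\mc{P}}:\mc{P}\longrightarrow\mbb{H}_{\X}\mc{P}$.
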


\begin{proof} Recall from remark (3.1.2) that $G^{\bullet}$ is built up  using only cartesian products and filtered colimits. The former products are always monoidal and filtered colimits are too because of our hypothesis (G3). Hence, for any sheaf $\A$ of $\mc{P}$-algebras,
the cosimplicial sheaf $G^{\bullet}\A$ is a $G^{\bullet}\mc{P}$-algebra. Also, $\mbf{s}:\Dl\D\longrightarrow\D$ is monoidal,
and $\mbf{s}:\Dl\Sh{\X}{\D}\longrightarrow\Sh{\X}{\D}$ is defined object-wise, so
$\mbb{H}_{\X} \A = \mbf{s} G^{\bullet}\A $ is a sheaf of $\mbb{H}_{\X}\mc{P}$-algebras, and, thanks to $\rho_\mc{P} : \mc{P} \longrightarrow \mbb{H}_{\X}\mc{P}$, we get that
$\mbb{H}_{\X} \A\in\Alg_{\mc{P}}^{\Sh{\X}{\D}}$ as required.

Finally, recall that the morphism of sheaves $\rho_\A : \A \longrightarrow \mbb{H}_{\X}\A$ is the composition of the natural transformation $\lambda$ (which is object-wise monoidal because $\D$ is a monoidal descent category) with the simple of the canonical augmentation $\id \longrightarrow G^\bullet$, which is object-wise monoidal by construction. Hence, $\rho_\A$ is a morphism of sheaves of $\mc{P}$-algebras.
\end{proof}

As for $\Scal$ and $\W$: call a morphism of sheaves of $\mc{P}$-algebras a \emph{global equivalence} (resp. \emph{local equivalence}) if it is a global one (resp, a local one) as a morphism of plain sheaves in $\Sheaf (\X, \D)$. Obviously, $\Scal \subset \W$, and we get the following version of our main result in this setting:

\begin{theorem}\label{CEAlgP} For every compatible pair $(\X, \D)$  satisfying hypotheses $(\ref{hipotesis1})$ and  any sheaf of operads $\mc{P}$, $(\Alg^{\Sh{\X}{\D}}_{\mc{P}},\Scal,\W)$ is a Cartan-Eilenberg category and $\rho_{\A}: \A \longrightarrow \mbb{H}_{\X} \A$ is a CE-fibrant model for any sheaf of $\mc{P}$-algebras $\A$.
\end{theorem}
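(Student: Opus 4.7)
The strategy is to transfer the Cartan--Eilenberg structure from $\Sh{\X}{\D}$ to $\Alg_{\mc{P}}^{\Sh{\X}{\D}}$ along the forgetful functor $U : \Alg_{\mc{P}}^{\Sh{\X}{\D}} \longrightarrow \Sh{\X}{\D}$. By the very definition of $\Scal$ and $\W$ on $\Alg_{\mc{P}}^{\Sh{\X}{\D}}$ (the underlying sheaf morphism belongs to the corresponding class in $\Sh{\X}{\D}$), $U$ preserves and reflects both classes; in particular $\Scal\subset\W$ in $\Alg_{\mc{P}}^{\Sh{\X}{\D}}$. Moreover, Lemma \ref{lemaCEAlgP} has already placed the candidate resolvent $\mbb{H}_{\X}\A$ and the localization morphism $\rho_\A:\A\to\mbb{H}_{\X}\A$ inside $\Alg_{\mc{P}}^{\Sh{\X}{\D}}$, so no further construction of the functor is needed.

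The two verifications that do the actual work are: (i) $\rho_\A$ lies in $\W$; and (ii) $\mbb{H}_{\X}\A$ is CE-fibrant, which we will establish via Thomason's descent. For (i), the compatibility of $(\X,\D)$ together with condition (2) of Theorem \ref{maintheoremRR} yields that $U\rho_\A=\rho_{U\A}$ is a local equivalence in $\Sh{\X}{\D}$, and this is exactly the definition of $\rho_\A\in\W$ on $\Alg_{\mc{P}}^{\Sh{\X}{\D}}$. For (ii), applying Lemma \ref{lemaCEAlgP} to the sheaf of $\mc{P}$-algebras $\mbb{H}_{\X}\A$ shows that $\rho_{\mbb{H}_{\X}\A}:\mbb{H}_{\X}\A\to\mbb{H}_{\X}^{2}\A$ is itself a morphism of $\mc{P}$-algebras; combined with condition (4) of Theorem \ref{maintheoremRR} applied to the underlying sheaf, this gives $\rho_{\mbb{H}_{\X}\A}\in\Scal$ in $\Alg_{\mc{P}}^{\Sh{\X}{\D}}$, i.e.\ Thomason's descent.

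The main obstacle is the final abstract step: upgrading (i) and (ii) to the full CE-structure on $(\Alg_{\mc{P}}^{\Sh{\X}{\D}},\Scal,\W)$ with $\mbb{H}_{\X}$ as resolvent, since Theorem \ref{maintheoremRR} cannot be invoked directly ($\Alg_{\mc{P}}^{\Sh{\X}{\D}}$ is not of the form $\Sh{\X}{\D'}$ for a descent category $\D'$, because the fixed sheaf of operads $\mc{P}$ is not constant). The plan is to imitate the formal implication (4) $\Rightarrow$ (1) in the proof of \ref{maintheoremRR}, which depends only on the naturality of $\rho$, the Thomason descent statement proved in (ii), and the standard closure properties (two-out-of-three, stability) of $\Scal$ and $\W$, all of which are inherited from $\Sh{\X}{\D}$ along $U$. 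If any additional descent-type axiom is required to ensure that the lifting property of fibrant models against $\W$-morphisms of $\mc{P}$-algebras holds, the fallback is to invoke the transfer lemma (2.3) of \cite{RR} applied to $U$, in the same spirit as the proofs of \ref{OperadsSonDeDescenso}--\ref{PAlgebrasSonDeDescenso}.
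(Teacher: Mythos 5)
Your overall strategy coincides with the paper's: use Lemma \ref{lemaCEAlgP} to restrict $\mbb{H}_{\X}$ and $\rho$ to $\Alg_{\mc{P}}^{\Sh{\X}{\D}}$, observe that $\Scal$ and $\W$ are detected on underlying sheaves, and then conclude by an abstract criterion. Your verifications (i) and (ii) are correct and do follow from compatibility exactly as you say. The one genuine gap is the step you yourself flag as ``the main obstacle'': you never actually close it, and neither of your two proposed routes is the right one. Re-running the implication $(4)\Rightarrow(1)$ of Theorem \ref{maintheoremRR} is not available, because that implication is proved for categories of the form $\Sh{\X}{\D'}$ and, as you note, $\Alg_{\mc{P}}^{\Sh{\X}{\D}}$ is not of that form; and the transfer lemma (2.3) of \cite{RR} transfers \emph{descent} structures along a functor, not Cartan--Eilenberg structures, so it is the wrong tool here.

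What the paper does instead is purely formal and quotable: since $(\mbb{H}_{\X},\rho)$ is a resolvent functor for $(\Sh{\X}{\D},\Scal,\W)$, it satisfies (1) $\mbb{H}_{\X}^{-1}(\Scal)=\W$, (2) $\mbb{H}_{\X}(\Scal)\subset\Scal$, and (3) $\rho_{\mbb{H}_{\X}\F}$ and $\mbb{H}_{\X}(\rho_{\F})$ lie in $\Scal$. Because $\Scal$ and $\W$ on $\mc{P}$-algebras are defined by the underlying sheaf morphisms and Lemma \ref{lemaCEAlgP} shows $\mbb{H}_{\X}$ and $\rho$ restrict, properties (1)--(3) hold verbatim on $\Alg_{\mc{P}}^{\Sh{\X}{\D}}$, and then \cite[Theorem 2.5.4]{GNPR} --- the characterization of right CE-categories via resolvent functors --- gives the conclusion directly, with no descent-type axiom or lifting argument needed. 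Note also that conditions (1)--(3) are not quite your (i) and (ii): your (i) is a consequence of (1) and (3) rather than one of the hypotheses, so even granting your verifications you would still need to check (1) and (2) (both immediate here) before any version of the abstract step applies.
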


\begin{proof} Since $\D$ is compatible with the site $\X$,
$(\Sh{\X}{\D},\Scal,\W)$ is a Cartan-Eilenberg category and
$\rho_{\F}: \F \longrightarrow \mbb{H}_{\X} \F$ is a CE-fibrant model for any sheaf $\F$. Thus $(\mbb{H}_{\X} ,\rho_{\F})$ is a resolvent functor for $(\Sh{\X}{\D},\Scal,\W)$ and this in particular implies that
(1) $\mbb{H}_{\X}^{-1}(\mc{S})=\W$, (2) $\mbb{H}_{\X}(\Scal)\subset\Scal$ and (3) for any sheaf $\F$ it holds that $\rho_{\mbb{H}_{\X} \F}$ and $\mbb{H}_{\X} (\rho_{\F})$ belong to $\mc{S}$. By lemma (\ref{lemaCEAlgP}), $\mbb{H}_{\X}$ and $\rho: \mrm{id} \longrightarrow \mbb{H}_{\X} $ restrict to $\Alg^{\Sh{\X}{\D}}_{\mc{P}}$.
Consequently, the above properties (1)-(3) also hold for sheaves of $\mc{P}$-algebras, and in that case, the result follows directly from
\cite[Theorem 2.5.4]{GNPR}.
\end{proof}

Hence we obtain properties (\ref{PropertiesCompat}) for $\Alg^{\Sh{\X}{\D}}_{\mc{P}}$ too. Also, it follows directly from the definitions that the direct image functor $f_*$ associated to a morphism of sites maps a sheaf of $\mc{P}$-algebras to a sheaf of $f_*\mc{P}$-algebras, and

\begin{corollary} Let $f: \X \longrightarrow \Y$ be a continuous functor of Grothendieck sites and $(\D, \mathrm{E})$ a descent category compatible with the site $\X$ and satisfying hypotheses (\ref{hipotesis11}). Then the direct image functor
$f_* : \Alg_{\mc{P}}^{\Sh{\X}{\D}} \longrightarrow \Alg_{f_*\mc{P}}^{\Sh{\Y}{\D}}$
admits a right derived functor given by $\rdf  f_* (\A) = f_* \mbb{H}_{\X} (\A)$. In particular, if $\X$ has a final object $X$, sheaf cohomology can be computed as $\rdf\Gamma(X,\A) = \Gamma(X,\mbb{H}_{\X}(\A))$ and $\rdf \Gamma (X, \A)$ is a $\rdf \Gamma (X, \Pcal)$-algebra.
\end{corollary}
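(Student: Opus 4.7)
The plan is to deduce the corollary from Theorem~\ref{CEAlgP}, which already equips $(\Alg^{\Sh{\X}{\D}}_{\mc{P}},\Scal,\W)$ with a Cartan--Eilenberg structure whose resolvent functor is $(\mbb{H}_{\X},\rho)$. Once this is in hand, both the existence of $\rdf f_*$ and the formula $\rdf f_*(\A)=f_*\mbb{H}_{\X}(\A)$ reduce to checking that $f_*$ restricts to a functor between the appropriate algebra categories and sends strong equivalences to weak equivalences; the general CE-category machinery from \cite{GNPR} (compare the proof pattern of Corollary~\ref{existenciaderivadoimagenesdirectas}) then finishes the job.

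First I would verify that $f_*$ lifts to a functor $\Alg^{\Sh{\X}{\D}}_{\mc{P}}\longrightarrow \Alg^{\Sh{\Y}{\D}}_{f_*\mc{P}}$. Recall from Definition~\ref{defiAlgSinMonoidal} that a sheaf of $\mc{P}$-algebras is just a sheaf $\A$ together with object-wise structure morphisms $\widehat{\alpha}_{\A}(l)(U):\mc{P}(l)(U)\otimes\A(U)^{\otimes l}\longrightarrow \A(U)$ natural in $U\in\X$. Since $f_*$ is defined object-wise by evaluating at the appropriate objects of $\X$ associated with objects of $\Y$, these structure morphisms transport verbatim to give $f_*\A$ the structure of a sheaf of $f_*\mc{P}$-algebras; naturality is inherited from the naturality of $\widehat{\alpha}_{\A}(l)$.

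Next I would check that $f_*$ respects the classes $\Scal$ and $\W$. By definition, a morphism $\varphi$ lies in $\Scal_{\X}$ iff $\varphi(U)\in\mathrm{E}$ for every $U\in\X$; since $(f_*\varphi)(V)$ is, on each $V\in\Y$, again one of the $\varphi(U)$, the direct image satisfies the same condition in $\Y$, i.e.\ $f_*\varphi\in\Scal_{\Y}\subset\W_{\Y}$. Combined with Theorem~\ref{CEAlgP}, this is exactly the hypothesis needed to conclude, via the standard derivation result for CE-categories, that $f_*$ admits a right derived functor computed as $\rdf f_*(\A)=f_*\mbb{H}_{\X}(\A)$.

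Finally, for sheaf cohomology I would specialize to the terminal object $X$, viewing $\Gamma(X,-)$ either as the direct image under the morphism of sites from $\X$ to the punctual site or by re-running the previous argument directly. The multiplicative refinement $\rdf\Gamma(X,\A)\in\Alg^{\D}_{\rdf\Gamma(X,\mc{P})}$ follows immediately from Lemma~\ref{lemaCEAlgP}: its proof in fact shows that $\mbb{H}_{\X}\A$ carries a natural $\mbb{H}_{\X}\mc{P}$-algebra structure, and taking global sections is an object-wise operation that respects this object-wise structure, so $\Gamma(X,\mbb{H}_{\X}\A)=\rdf\Gamma(X,\A)$ acquires a $\Gamma(X,\mbb{H}_{\X}\mc{P})=\rdf\Gamma(X,\mc{P})$-algebra structure. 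There is no genuine obstacle here; the only point that demands any care is the multiplicative bookkeeping, and this is routine because Definition~\ref{defiAlgSinMonoidal} was deliberately set up object-wise precisely to make both $f_*$ and $\Gamma(X,-)$ preserve algebra structures tautologically.
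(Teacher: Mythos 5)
Your proposal is correct and follows essentially the same route as the paper, which states this corollary without a separate proof beyond noting that $f_*$ sends sheaves of $\mc{P}$-algebras to sheaves of $f_*\mc{P}$-algebras ``directly from the definitions'' and then invoking Theorem~\ref{CEAlgP} together with the general CE-category derivation machinery already used for Corollary~\ref{existenciaderivadoimagenesdirectas}. Your explicit verifications (object-wise transport of the structure maps along $f_*$, preservation of $\Scal$, and the $\rdf\Gamma(X,\Pcal)$-algebra structure via Lemma~\ref{lemaCEAlgP}) are exactly the details the authors leave implicit.
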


\subsection{Symmetric spectra and further examples}\label{espectros} Here we generalize previous results to the case of sheaves of operads and operad algebras in a category $\D$ which is not a monoidal category itself, but is a full subcategory of a monoidal category.

\begin{definition} Let $\C$ be a symmetric monoidal category and $\D \subset \C$ a full subcategory. The category of \textit{operads in} $\D$, $\Op^{\D}$, is the full subcategory of $\Op^{\C}$ whose objects are operads $P$ such that $P(n)$ belongs to $\D$ for all $n$. Analogously, $\Alg^\D$ and $\Alg^\D_P$ for a fixed operad $P$ in $\C$ are defined, respectively, as the full subcategories of $\Alg^\C$ and $\Alg^\C_P$, which objects are the algebras that belong to $\D$ after forgetting the algebra structure.
\end{definition}

In this way we may consider the categories $\Op^{{\Omega}\Sp^{\Sigma}}$,  $\Alg^{{\Omega}\Sp^{\Sigma}}$, or $\Alg^{{\Omega}\Sp^{\Sigma}}_P$, where $P$ is an  operad of symmetric $\Omega$-spectra.

\begin{definition} Let $\D$ be a full subcategory of a symmetric monoidal category $\C$.  We say that $\D$ is a \emph{monoidal descent subcategory} if $\D $ admits a descent structure $(\mrm{E},\mbf{s},\mu,\lambda)$ such that the triple $(\mbf{s},\mu,\lambda)$ extends to the ambient monoidal category $\C$ and it is compatible there with the symmetric monoidal structure on $\C$ as in (\ref{MonoidalDescentDefi}).
\end{definition}

\begin{remark} The point here is that $\C$ \emph{needs not} satisfy our hypotheses (\ref{hipotesis1}).
\end{remark}

\subsubsection{} Proposition (\ref{topologicalSpaces}) says that every simplicial symmetric monoidal model category in which all objects are fibrant is a monoidal descent category. Now we can go further:

\begin{proposition}
For any simplicial symmetric monoidal model category $\M$, its category of fibrant objects $\M_f$ is a monoidal descent subcategory.
\end{proposition}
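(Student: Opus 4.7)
The plan is to reduce the statement to Proposition~(\ref{topologicalSpaces}) by observing that the only reason fibrancy is used there is to guarantee that homotopy limits behave well with respect to weak equivalences; the \emph{monoidal} compatibility of $(\mbf{s},\mu,\lambda)$ is a purely formal consequence of the end-formula for $\holim$ together with the simplicial enrichment, and it never uses fibrancy of the inputs.

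First, I would recall from [Ro1, 3.2] that $\mc{M}_f$ equipped with $\mrm{E}=$ weak equivalences and $\mbf{s}=\holim$ is already a descent category. Here one uses that $\holim$ of a cosimplicial object of fibrant objects is again fibrant (standard for simplicial model categories with all objects cofibrant in the Reedy sense, cf.\ \cite{Hir}), so $\mbf{s}$ really lands in $\mc{M}_f$. The natural transformations $\mu$ and $\lambda$ are constructed exactly as in the proof of Proposition~(\ref{topologicalSpaces}): $\lambda$ from the unique functor $\Dl\longrightarrow\ast$, and $\mu$ from the diagonal $d:\Dl\longrightarrow\Dl\times\Dl$ via the Fubini property of $\holim$.

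Next, I would verify the extension clause of the definition of monoidal descent subcategory. The formula
\[
\holim X \;=\; \int_n (X^n)^{\mrm{N}(\Dl\downarrow n)}
\]
makes perfect sense for an arbitrary cosimplicial object of $\mc{M}$, since it only uses the simplicial powering and small limits, which exist in $\mc{M}$. So $\mbf{s}:\Dl\mc{M}_f\longrightarrow\mc{M}_f$ extends to a functor $\Dl\mc{M}\longrightarrow\mc{M}$. The morphisms $\lambda_A:A\longrightarrow\mbf{s}(cA)$ and $\mu_Z:\mbf{s}\mbf{s}Z\longrightarrow\mbf{s}\mrm{D}Z$ are defined by the same categorical recipe for all $A\in\mc{M}$ and $Z\in\Dl\Dl\mc{M}$, so the full triple $(\mbf{s},\mu,\lambda)$ extends to the ambient category $\mc{M}$.

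Finally, I would check that this extended triple is compatible with the symmetric monoidal structure on $\mc{M}$. This is exactly the computation carried out in the proof of Proposition~(\ref{topologicalSpaces}): the K\"unneth morphism $\kappa$ for $\holim$ is built as the composition $d\comp f\comp e$ of (i) the canonical arrow $e$ coming from the fact that ends are lax monoidal, (ii) the arrow $f$ produced by the compatibility between $\otimes$ and the simplicial powering, and (iii) the arrow $d$ induced by the diagonal embedding of nerves; the monoidality of $\mu$ and $\lambda$ follows in the same way from the compatibility of their defining diagrams with products. None of these three ingredients appeals to fibrancy of the cosimplicial inputs, so they produce the required monoidal structure on $(\mbf{s},\mu,\lambda)$ on all of $\mc{M}$.

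The main obstacle I anticipate is purely bookkeeping rather than conceptual: one must be careful to distinguish the role of fibrancy, which enters only in ensuring that $\mbf{s}$ restricted to $\Dl\mc{M}_f$ both lands in $\mc{M}_f$ and preserves weak equivalences (hence yields a \emph{descent} structure on $\mc{M}_f$), from the monoidal compatibility axioms (M1)--(M3), which concern the ambient $\mc{M}$ and are verified by the end-based arguments above. Once this distinction is made, the proof is essentially a reference to Proposition~(\ref{topologicalSpaces}) together with the observation that its monoidal verifications were insensitive to the fibrancy hypothesis.
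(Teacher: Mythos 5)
Your proposal is correct and follows essentially the same route as the paper: the paper's proof is precisely the observation that the triple $(\holim,\mu,\lambda)$ giving the descent structure on $\M_f$ is defined by the same end formula on all of $\M$, and that the monoidal verifications from Proposition~(\ref{topologicalSpaces}) carry over verbatim since they never use fibrancy. Your version merely spells out in more detail what the paper states in one sentence.
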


\begin{proof} This works like the proof of Proposition (\ref{topologicalSpaces}), just observing that the triple $(\holim,\mu,\lambda)$ which gives a descent structure on $\M_f$ is defined on all $\M$ in the same way. That is, $\holim X=\int_n (X^n)^{\mrm{N}(\Dl\downarrow n)}$, and analogously for $\mu$ and $\lambda$.
\end{proof}

\begin{examples} We will focus on two particular instances of $\M$:
\begin{itemize}
\item Let $\M = \Sset$ be the category of pointed simplicial sets, with weak homotopy equivalences as weak equivalences and the smash product as tensor product. Then, the category  $\Sset_f$ of pointed Kan complexes is a monoidal descent subcategory.
\item Let $\M = \Sp^{\Sigma}$ be the category of  symmetric spectra, with stable equivalences as weak equivalences and the smash product as tensor product (see \cite{HSS}). Its fibrant objects $(\Sp^{\Sigma})_f ={\Omega}\Sp^{\Sigma}$,  are precisely the $\Omega$-spectra: those symmetric spectra which are level-wise Kan complexes and such that the adjoint to the structure maps $S^1 \wedge X_n \rightarrow X_{n+1}$ are weak equivalences for all $n$. Then, ${\Omega}\Sp^{\Sigma}$ is a monoidal descent subcategory.
\end{itemize}
\end{examples}

Now we can reobtain our previous results for monoidal descent subcategories $\D$. For instance,  we have natural descent structures on $\Op^\D$, $\Alg^\D$ and $\Alg^{\D}_P$.

\begin{proposition}\label{OperadsSonDeDescensoII}
For every monoidal descent subcategory $(\D,\mrm{E})$, we have
\begin{itemize}
 \item[(1)] $\Op^\D$ has a descent structure, defined arity-wise.
 \item[(2)] $\Alg^\D$  has a descent structure, com\-pa\-ti\-ble with the one on $\D$.
 \item[(3)] For any operad $P \in \Op^\C$, $\Alg^{\D}_P$ has a descent structure, com\-pa\-ti\-ble with the one on $\D$.
\end{itemize}
\end{proposition}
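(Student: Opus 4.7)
The overall strategy is to reduce each of the three statements to the analogue of Propositions \ref{OperadsSonDeDescenso}, \ref{AlgebrasSonDeDescenso} and \ref{PAlgebrasSonDeDescenso}, carried out formally in the ambient monoidal category $\C$, and then verify that the resulting construction descends to the full subcategory $\D$. By hypothesis, the triple $(\mbf{s},\mu,\lambda)$ extends to a compatible symmetric monoidal triple $(\mbf{s}_{\C},\mu_{\C},\lambda_{\C})$ on $\C$; note that the class of equivalences $\mrm{E}$ needs not extend, which is fine because all equivalences we will consider on $\Op^{\D}$, $\Alg^{\D}$, $\Alg^{\D}_P$ are arity-wise or component-wise and consist of morphisms in $\D$. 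Fullness of the inclusion $\D\hookrightarrow \C$ guarantees that operad and algebra structure maps between objects of $\D$ computed in $\C$ are automatically morphisms in $\D$.

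For \emph{(1)}, I define $\Op^{\mrm{E}}$ arity-wise exactly as in the proof of Proposition \ref{OperadsSonDeDescenso}. The simple functor $\Simpl\Op^{\D}=\Op^{\Simpl\D}\longrightarrow\Op^{\D}$ is defined by applying $\mbf{s}_{\C}$ arity-wise: the key point is that for $P\in \Op^{\Simpl\D}$ one has $P(l)\in \Simpl\D$, and since $\mbf{s}_{\C}|_{\Simpl\D}=\mbf{s}$, the value $\mbf{s}_{\C}(P(l))=\mbf{s}(P(l))$ lies in $\D$. The operad structure of $\mbf{s}_{\C}P$ is produced via the K\"{u}nneth morphism of $\mbf{s}_{\C}$ in $\C$ (as in \ref{OperadsSonDeDescenso}), and its structure maps are morphisms in $\D$ by fullness. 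Natural transformations $\Op^{\mu}$ and $\Op^{\lambda}$ are defined arity-wise from $\mu$ and $\lambda$ on $\D$. The axioms of descent category are then obtained by the transfer lemma (2.3) of \cite{RR} via the forgetful functor $\psi:\Op^{\D}\longrightarrow \sMod^{\D}$, exactly as before. For \emph{(2)}, the proof runs in parallel using $\Simpl\Alg^{\D}=\Alg^{\Simpl\D}$: for $(P,A)\in \Alg^{\Simpl\D}$ the arity-wise and object-wise application of $\mbf{s}_{\C}$ lands in $\D$ for the same reason, the algebra structure comes from the K\"{u}nneth morphism, and the transfer lemma is applied through $\psi:\Alg^{\D}\longrightarrow \Op^{\D}\times \D$.

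For \emph{(3)}, the only subtlety is that the fixed operad $P$ lives in $\Op^{\C}$ and need not be an operad in $\D$; nevertheless, for any $A\in\Alg^{\D}_P$, the underlying object is in $\D$, which is what matters. Following the pattern of \ref{PAlgebrasSonDeDescenso}, I set $\mbf{s}_P=\lambda_P^{\ast}\circ \Alg^{\mbf{s}_{\C}}_{cP}:\Simpl\Alg^{\D}_P=\Alg^{\Simpl\D}_{cP}\longrightarrow \Alg^{\D}_P$, where $\lambda_P:P\longrightarrow \mbf{s}_{\C}(cP)$ is the morphism of operads in $\C$ provided by $\lambda_{\C}$. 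For any cosimplicial $P$-algebra $A$ in $\D$, the underlying object $\mbf{s}_{\C}A$ lies in $\D$, and the pullback of its $\mbf{s}_{\C}(cP)$-algebra structure along $\lambda_P$ yields a $P$-algebra structure in $\C$ whose underlying object is in $\D$; hence $\mbf{s}_PA\in\Alg^{\D}_P$. The transformations $\mu_P$ and $\lambda_P$ are defined as in \ref{PAlgebrasSonDeDescenso}, using Remark \ref{obviousremark} applied to $\mbf{s}_{\C}$, and the axioms follow by transfer through the forgetful functor $\Alg^{\D}_P\longrightarrow \D$.

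The only substantive thing to check in all three cases is the closure property: that applying $\mbf{s}_{\C}$ arity-wise or component-wise to a diagram whose entries belong to $\D$ produces an output in $\D$. This is immediate from the assumption that $\mbf{s}_{\C}|_{\Simpl\D}=\mbf{s}$, so it is not really an obstacle but a systematic observation used throughout. Everything else—associativity, equivariance, unit, and compatibility between $\mbf{s}_{\C}$, $\mu_{\C}$, $\lambda_{\C}$ and the operad/algebra structure maps—is inherited formally from the corresponding statements in the ambient category $\C$ via the full inclusions $\Op^{\D}\subset \Op^{\C}$, $\Alg^{\D}\subset \Alg^{\C}$ and $\Alg^{\D}_P\subset \Alg^{\C}_P$.
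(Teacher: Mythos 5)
Your proposal is correct and follows essentially the same route as the paper: perform the construction arity-wise (resp.\ component-wise) in the ambient monoidal category $\C$ using the extended triple $(\mbf{s},\mu,\lambda)$, observe that it restricts to $\D$ because $\mbf{s}$ does, and conclude via the transfer lemma of \cite{RR} exactly as in Propositions \ref{OperadsSonDeDescenso}, \ref{AlgebrasSonDeDescenso} and \ref{PAlgebrasSonDeDescenso}. The paper only writes out case (1) and declares the others analogous, so your explicit treatment of (3) — where the fixed operad $P$ lives in $\Op^{\C}$ rather than $\Op^{\D}$ — is a welcome elaboration of a point the paper leaves implicit.
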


\begin{proof} We will see (1); the other statements are proved analogously. Since the triple $(\mbf{s},\mu,\lambda)$ is symmetric monoidal on the ambient category $\C$, it induces $(\Op^\simple, \Op^\mu , \Op^\lambda)$ defined arity-wise for $\Op^\C$. But since $(\mbf{s},\mu,\lambda)$ restricts to $\D$, $(\Op^\simple, \Op^\mu , \Op^\lambda)$ restricts to $\Op^\D$. Proof is then completed  using the transfer lemma as
in Proposition \ref{OperadsSonDeDescenso}.
\end{proof}

\begin{example} For any simplicial symmetric monoidal model category $\M$, $\Op^{\M_f}$, $\Alg^{\M_f}$ and $\Alg^{\M_f}_P$ are descent categories.
\end{example}

Again the compatibility of the base category $\D$ with a given site is transferred to operads and operad algebras in $\D$:

\begin{theorem} For every compatible pair $(\X, \D)$ of a Grothendieck site and a monoidal descent subcategory satisfying hypotheses $(\ref{hipotesis1})$, $\Op^{\D}$, ${\Alg^\D}$ and ${\Alg^\D_P}$ are also compatible with $\X$.
\end{theorem}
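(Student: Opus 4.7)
The plan is to reduce everything to the main theorem (\ref{maintheoremRR}) by verifying its hypotheses for $\E=\Op^\D, \Alg^\D, \Alg^\D_P$, following exactly the same pattern as the proof of Theorem (\ref{hacesoperadyalgopsonCE}). So I would need to check three things: (a) that $\E$ is a descent category, (b) that $\E$ satisfies hypotheses (\ref{hipotesis1}) with respect to the Grothendieck site $\X$, and (c) that for every cosimplicial sheaf $\mc{F}^\bullet\in\Dl\Sh{\X}{\E}$ the canonical map $\simple(\mc{F}^\bullet)_x\longrightarrow \simple(\mc{F}^\bullet_x)$ lies in the corresponding class of equivalences of $\E$.

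For (a) nothing new is required: Proposition (\ref{OperadsSonDeDescensoII}) already provides the descent structures on $\Op^\D$, $\Alg^\D$ and $\Alg^\D_P$ when $\D$ is a monoidal descent subcategory of a symmetric monoidal $\C$. In particular equivalences, simple functor and the natural transformations $\mu,\lambda$ are all defined arity-wise (for operads) or through the forgetful functor (for algebras).

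For (b) I would essentially re-run Lemma (\ref{lemaHipotesisAlg}). The standard fact that limits of operads are created arity-wise and that filtered colimits of operads are also created arity-wise whenever the monoidal product commutes with filtered colimits applies verbatim in the ambient $\C$, and then the relevant filtered colimits and products of objects of $\D$ stay in $\D$ precisely because (G1) holds for $\D$ itself. Once filtered colimits and products in $\Op^\D$, $\Alg^\D$ and $\Alg^\D_P$ are created arity-wise (respectively by forgetting the algebra structure), the $\mathrm{E}$-exactness assertion (G2) transfers immediately from $\D$ to $\E$ because the classes of equivalences on $\E$ are defined arity-wise as well. This is the step where I expect the main subtlety to arise: one must make sure that the ambient $\C$ is hospitable enough (e.g.\ that $A\otimes-$ preserves filtered colimits in $\C$) so that a filtered colimit of operads with arities in $\D$, computed in $\Op^\C$, indeed has all its arities in $\D$; for the motivating example $\D={\Omega}\Sp^\Sigma\subset\Sp^\Sigma=\C$ this follows from the fact that filtered colimits and products of $\Omega$-spectra, computed level-wise, are again $\Omega$-spectra, but an abstract argument requires some care.

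Finally, (c) is essentially automatic once (a) and (b) are in place. For operads the canonical comparison map is, in each arity $l$, the map $\simple(\mc{P}(l)^\bullet)_x\longrightarrow \simple(\mc{P}(l)^\bullet_x)$, which belongs to $\mathrm{E}$ because $(\X,\D)$ is compatible and equivalences in $\Op^\D$ are detected arity-wise; for algebras (both over variable and over a fixed constant operad) the same map is literally the plain sheaf comparison map after forgetting the algebra structure, so again the compatibility of $(\X,\D)$ suffices. Combining (a)–(c) with Theorem (\ref{maintheoremRR}) yields that $(\X,\Op^\D)$, $(\X,\Alg^\D)$ and $(\X,\Alg^\D_P)$ are all compatible pairs, which is the desired conclusion.
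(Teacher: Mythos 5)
Your proposal is correct and follows essentially the same route as the paper, which simply declares the proof ``completely analogous'' to that of Theorem (\ref{hacesoperadyalgopsonCE}): verify the descent structure via Proposition (\ref{OperadsSonDeDescensoII}), transfer hypotheses (G1)--(G2) arity-wise as in Lemma (\ref{lemaHipotesisAlg}), and check commutation of the simple functor with stalks arity-wise. If anything, you are more explicit than the paper about the one genuine subtlety --- that filtered colimits computed in the ambient category $\C$ must land back in the subcategory $\D$ --- a point the paper only acknowledges in the example following the theorem.
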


\begin{proof} The proof is completely analogous to the proof of theorem (\ref{hacesoperadyalgopsonCE}).
\end{proof}

\begin{example} We return  to the example of a symmetric monoidal simplicial model category $\M$. If, in addition, filtered colimits in $\M$ preserve fibrant objects, weak equivalences between them, and tensor products, then $\Op^{\M_f}$, $\Alg^{\M_f}$ and $\Alg^{\M_f}_P$ are compatible with any site $\X$ for which ${\M_f}$ was already compatible.
In particular, we obtain
\end{example}

\begin{corollary}
For any site $\X$ of finite cohomological dimension, and for $\M_f =\Sset_f$, or ${\Omega}\Sp^{\Sigma}$, $\Op^{\M_f}$, $\Alg^{\M_f}$ and $\Alg^{\M_f}_P$ are compatible with $\X$.
\end{corollary}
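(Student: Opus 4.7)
The plan is to apply the immediately preceding example to the two simplicial symmetric monoidal model categories $\M = \Sset$ (pointed simplicial sets) and $\M = \Sp^{\Sigma}$ (symmetric spectra), whose fibrant objects are $\Sset_f$ and ${\Omega}\Sp^{\Sigma}$, respectively. The example asks for two things: that $\M_f$ is already compatible with $\X$, and that filtered colimits in the ambient category $\M$ preserve fibrant objects, weak equivalences between fibrant objects, and tensor products; once these are in hand, the conclusion is immediate.

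The compatibility of $\Sset_f$ and ${\Omega}\Sp^{\Sigma}$ with sites of finite cohomological dimension is exactly items (4) and (5) of the list of examples of compatible pairs recalled in the introduction, proved in \cite{RR}. For the three filtered-colimit properties in $\Sset$, one uses compactness of the horns $\Lambda^n_k$ (they are finite presentable) to see that filtered colimits of Kan complexes are Kan; the fact that $\pi_{\ast}$ commutes with filtered colimits of pointed simplicial sets to see that filtered colimits preserve weak equivalences; and the left-adjoint nature of $-\wedge Y$ in each variable to obtain commutation with the smash product. For $\Sp^{\Sigma}$, one argues level-wise: filtered colimits are computed levelwise and therefore preserve the Kan condition at each level. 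They also commute with $\Omega = \mrm{map}_{\ast}(S^1,-)$ by compactness of $S^1$, so the adjoints of the structure maps $\Sigma X_n \to X_{n+1}$ remain weak equivalences after a filtered colimit, i.e.\ the $\Omega$-spectrum condition is preserved. Stable equivalences, being detected by stable homotopy groups which are themselves filtered colimits of simplicial homotopy groups of the spaces $X_n$, are preserved under filtered colimits. Finally, the smash product of symmetric spectra is a left adjoint in each variable \cite{HSS}, hence commutes with all colimits.

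The main obstacle, and essentially the only substantive verification, is the preservation of $\Omega$-spectra under filtered colimits in $\Sp^{\Sigma}$; this rests precisely on the compactness of $S^1$ in $\Sset$, which allows $\Omega$ to be commuted past the filtered colimit. With all three closure properties confirmed for $\M = \Sset$ and $\M = \Sp^{\Sigma}$, and with the compatibility of $\M_f$ with $\X$ cited from \cite{RR}, the preceding example directly yields compatibility of $\Op^{\M_f}$, $\Alg^{\M_f}$ and $\Alg^{\M_f}_P$ with $\X$, which is the content of the corollary.
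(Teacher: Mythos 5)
Your overall strategy---apply the preceding example (equivalently, the theorem on monoidal descent subcategories) to $\M=\Sset$ and $\M=\Sp^{\Sigma}$, after checking the filtered-colimit conditions and the compatibility of $\M_f$ with $\X$---is exactly the paper's route. But there is a genuine gap in how you discharge the second input for $\M_f={\Omega}\Sp^{\Sigma}$. You cite the compatibility of ${\Omega}\Sp^{\Sigma}$ with finite cohomological dimension sites as ``item (5) of the list recalled in the introduction, proved in \cite{RR}''. Item (5) concerns the category $\Sp_f$ of fibrant (non-symmetric) spectra, not symmetric $\Omega$-spectra; the authors evidently do not regard it as covering $\Sp^{\Sigma}$, since they devote a separate proposition, stated and proved immediately after the corollary, to establishing that ${\Omega}\Sp^{\Sigma}$ is compatible with such sites. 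The argument there is short but not vacuous: one uses that stable equivalences \emph{between $\Omega$-spectra} are exactly level-wise weak equivalences of simplicial sets, that limits, colimits and the simplicial action in $\Sp^{\Sigma}$ are all defined level-wise, so that $\mbb{H}_{\X}(\F)_n=\mbb{H}_{\X}(\F_n)$ and $(\rho_\F)_n=\rho_{\F_n}$, and then one invokes the compatibility of $\Sset_f$ with $\X$ from \cite{RR}. You need either this reduction or an equivalent substitute; as written, the symmetric-spectrum half of the corollary rests on a citation that does not cover it.

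A second, related point: your assertion that stable equivalences of symmetric spectra ``are detected by stable homotopy groups which are themselves filtered colimits of simplicial homotopy groups of the spaces $X_n$'' is false for general symmetric spectra---$\pi_*$-isomorphisms are stable equivalences but not conversely, which is precisely the subtlety studied in \cite{S} and the reason the paper must restrict to ${\Omega}\Sp^{\Sigma}$ in the first place. Your conclusion survives because the example only requires filtered colimits to preserve weak equivalences \emph{between fibrant objects}, and between $\Omega$-spectra stable equivalences do coincide with level-wise (hence $\pi_*$-) equivalences; but the justification should be phrased that way. Your verifications of the remaining filtered-colimit conditions (Kan condition via compactness of horns, preservation of the $\Omega$-spectrum condition via compactness of $S^1$, smash product as a left adjoint in each variable per \cite{HSS}) are fine and are essentially what the paper's preceding example presupposes.
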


\begin{proof} Follows from the previous theorem and the proposition below.
\end{proof}

\begin{proposition} Any finite cohomological dimension site $\X$ is compatible with the monoidal descent subcategories $\Sset_f$ and ${\Omega}\Sp^{\Sigma}$.
\end{proposition}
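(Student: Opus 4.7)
By Theorem~\ref{maintheoremRR}, to conclude that $(\X,\D)$ is compatible it is enough to verify condition (3): the simple functor commutes with stalks up to equivalence. Concretely, for each sheaf $\F$ and each point $x$ of $\X$, the natural map
\[
(\mbf{s}\,G^{\bullet}\F)_{x} \longrightarrow \mbf{s}\,(G^{\bullet}\F_{x})
\]
must be a weak equivalence. The strategy is to reduce this, in both cases, to the statement that \emph{finite} homotopy limits of pointed Kan complexes commute with filtered colimits, which is a standard fact in $\Sset_f$.

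For $\Sset_{f}$, the plan is to use the hypothesis of finite cohomological dimension to truncate the cosimplicial Godement resolution at a finite level. If $\X$ has cohomological dimension $\leq d$, one shows that $G^{\bullet}\F$ is essentially concentrated in cosimplicial degrees $\leq d+1$ in the sense that the comparison map from $\mbf{s}\,G^{\bullet}\F$ to the homotopy limit of its cosimplicial truncation at level $d+1$ is a weak equivalence of pointed Kan complexes. A homotopy limit over a bounded cosimplicial diagram is a finite homotopy limit, and hence commutes with filtered colimits in $\Sset_{f}$; this immediately yields the required commutation with stalks. This argument is essentially the one used in \cite{RR} for the compatibility of $\Sset_f$ with sites of finite type, carried out under the weaker assumption of finite cohomological dimension.

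For ${\Omega}\Sp^{\Sigma}$, the plan is to reduce everything level-wise to the $\Sset_{f}$ case. A symmetric $\Omega$-spectrum is a sequence $E = (E_{n})_{n\geq 0}$ of pointed Kan complexes with equivariant structure maps whose adjoints $E_{n}\longrightarrow \Omega E_{n+1}$ are weak equivalences; stable equivalences between $\Omega$-spectra are precisely the level-wise weak equivalences of Kan complexes, and filtered colimits, cartesian products and the Bousfield--Kan $\holim$ are all computed level-wise. Consequently, the Godement construction on sheaves of $\Omega$-spectra, as well as the stalk functor, are also level-wise, and the canonical map $(\mbf{s}\,G^{\bullet}\F)_{x} \longrightarrow \mbf{s}\,(G^{\bullet}\F_{x})$ of sheaves of $\Omega$-spectra is a stable equivalence iff each of its levels is a weak equivalence in $\Sset_{f}$, which is the content of the previous paragraph.

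The main obstacle is the $\Sset_f$ step, specifically making precise the claim that $\mbf{s}\,G^{\bullet}\F$ agrees up to weak equivalence with its truncation at cosimplicial level $d+1$. The intuition is that the higher cosimplicial degrees of the Godement resolution only contribute to sheaf cohomology in degrees beyond $d$, which vanishes by hypothesis; turning this vanishing into an honest weak equivalence of pointed Kan complexes requires a Reedy-fibrancy analysis of $G^{\bullet}\F$, and this is the technical heart of the argument. Once this is secured, the passage to ${\Omega}\Sp^{\Sigma}$ is purely formal, since everything in sight is level-wise.
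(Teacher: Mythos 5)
Your treatment of ${\Omega}\Sp^{\Sigma}$ is exactly the paper's argument: stable equivalences between $\Omega$-spectra are level-wise weak equivalences, and since limits, colimits and the simplicial action on symmetric spectra are all defined level-wise from pointed simplicial sets, the Godement resolution, the simple functor and the stalks all commute with taking levels, so the question reduces to the compatibility of $\Sset_f$ with $\X$. (The paper checks condition (2) of Theorem~\ref{maintheoremRR} rather than condition (3), but these are equivalent, so that difference is immaterial.)

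The divergence is in the $\Sset_f$ half. The paper does not reprove it: it simply invokes \cite[Theorem 5.7]{RR}, where the compatibility of $\Sset_f$ with finite cohomological dimension sites is already established. Your attempted re-derivation contains a genuine gap, and moreover the step you flag as the "technical heart" is not merely unproved but false as stated: if $\X$ has cohomological dimension $\leq d$, it is \emph{not} true that $\mbf{s}\,G^{\bullet}\F \longrightarrow \mathrm{Tot}_{d+1}(G^{\bullet}\F)$ is a weak equivalence. The fibres of the Tot tower are $\Omega^{s}$ of the normalized (matching) objects, whose homotopy groups are the normalized cochains $N^{s}\pi_{t}$, not their cohomology; these do not vanish for $s>d$, and brutally truncating the tower at level $d+1$ replaces $E_2^{d+1,t}=H^{d+1}(\X;\pi_t\F)=0$ by the cokernel of $d_1\colon N^{d}\pi_t\to N^{d+1}\pi_t$, which is nonzero in general. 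The correct argument (the one in \cite{RR}, following Thomason and Mitchell) does not compare $\mathrm{Tot}$ with a single finite partial totalization; it uses the vanishing of $E_2^{s,t}$ for $s>d$ to get \emph{complete convergence} of the Bousfield--Kan spectral sequences on both sides of the stalk comparison map, together with the fact that the $E_2$-terms $H^{s}(\X;\pi_t\F)$ commute with the filtered colimits computing stalks, and then concludes by a spectral sequence comparison. So either cite \cite[Theorem 5.7]{RR} as the paper does, or replace the truncation argument by the convergence argument.
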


\begin{proof} For $\Sset_f$, see \cite{RR}, theorem (5.7). As for ${\Omega}\Sp^{\Sigma}$, it is enough to show that for every sheaf $\F\in \Sh{\X}{{\Omega}\Sp^{\Sigma}}$, $\rho_\F : \F\longrightarrow \mbb{H}_{\X} (\F)$ is in $\mc{W}$. Since  stable equivalences between $\Omega$-spectra are the same as level-wise weak equivalences of simplicial sets, it suffices to see that $(\rho_\F)_n$ is a weak equivalence of simplicial sets for all $n\geq 0$. In $\Sp^{\Sigma}$, simplicial action, limits and colimits are all defined by extension from the same operations on pointed simplicial sets, and hence they are given arity-wise (see \cite{HSS}). Then, $\mbb{H}_{\X}(\F)_n = \mbb{H}_{\X}(\F_n)$ and $(\rho_\F)_n$ agrees with the corresponding morphism of simplicial sets $\rho_{\F_n}$. But this is a weak equivalence because by \cite[5.3]{RR} $\Sset_f$ is compatible with $\X$, so we are done.
\end{proof}

\subsection{Monoidal structures for categories of sheaves} So far we have carefully avoided  the use of the associated sheaf, but now we are going to free ourselves of this coarse limitation.

\begin{hypotheses}\label{hipotesis2} For this, we have to slightly modify our hypothesis (G1) in (\ref{hipotesis1}) and add a new hypothesis (G4):

\begin{enumerate}
\item[(G0)] $\X$ is a Grothendieck site with a set $X$ of enough points.
\item[(G1')] $\D$ is closed under filtered colimits and arbitrary limits.
\item[(G2)] Filtered colimits and arbitrary products in $\D$ are $\mathrm{E}$-exact.
\item[(G3)] For every object $A \in \D$, $A\otimes- :\D\longrightarrow \D$ preserves filtered colimits.
\item[(G4)] Filtered colimits commute with finite limits in $\D$.
\end{enumerate}

\end{hypotheses}

Hypotheses (G1') and (G4) guarantee the existence of the \emph{associated sheaf} functor $(\  )^a: \PrSheaf (\X , \D) \longrightarrow   \Sheaf (\X, \D )$ (see \cite{Ul}).

Then everything is streamlined because, as a consequence, $\Sheaf (\X, \D)$ is a (closed) monoidal category when $\D$ is, and we can say the  Godement resolution and the direct image functor are outright monoidal functors, and the detour of stating this object-wise, as we have been doing so far, becomes unnecessary.

\begin{lemma}\label{shsymclosed} For every (closed) monoidal category $\D$ satisfying hypothesis (G1') and (G4) in \emph{(\ref{hipotesis2})}, $\Sh{\X}{\D}$ is also a (closed) monoidal category.
\end{lemma}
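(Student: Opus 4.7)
The plan is to build the (closed) symmetric monoidal structure on $\Sh{\X}{\D}$ by transporting the one on $\PrSh{\X}{\D}$ through the sheafification functor $L: \PrSh{\X}{\D}\longrightarrow \Sh{\X}{\D}$, which is left adjoint to the inclusion $i$ and whose existence is guaranteed by Ulmer's theorem under the hypotheses (G1') and (G4). First I would equip $\PrSh{\X}{\D}$ with the pointwise (closed) symmetric monoidal structure inherited from $\D$: set $(\F\otimes_{\PrSh}\G)(U):=\F(U)\otimes\G(U)$ with the obvious restriction maps, and let the unit be the constant presheaf with value $\unidad$; in the closed case, the internal hom is defined Yoneda-style. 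Since every operation is pointwise, the coherence axioms (associator, unitors, symmetry) are immediate from those of $\D$.

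Next, I would define, for $\F,\G\in\Sh{\X}{\D}$,
$$
\F\otimes_{\Sh}\G \ := \ L(i\F\otimes_{\PrSh} i\G), \qquad \unidad_{\Sh} \ := \ L(\unidad_{\PrSh}),
$$
and transport the coherence isomorphisms by applying $L$ to their counterparts in $\PrSh{\X}{\D}$. The crucial point is then to verify that this really defines a monoidal structure on $\Sh{\X}{\D}$; by the theory of reflective localizations, this reduces to showing that $L$ is strong monoidal, or equivalently that the class of local isomorphisms (the morphisms inverted by $L$) is stable under pointwise tensor product. Here hypothesis (G4) enters crucially: together with (G1') it makes $L$ left exact, because $L$ is realised by an iterated plus construction built out of filtered colimits of finite limits. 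When $\D$ is closed, the functor $-\otimes A$ is a left adjoint for every $A\in \D$ and hence preserves all colimits; combining this with left exactness of $L$, one checks that tensoring with any fixed presheaf sends local isomorphisms to local isomorphisms. As a consequence the canonical comparison $L(i\F \otimes_{\PrSh} i\G)\otimes_{\Sh} L(i\Hcal \otimes_{\PrSh} i\K)\cong L(i\F\otimes_{\PrSh} i\G\otimes_{\PrSh} i\Hcal\otimes_{\PrSh} i\K)$ is an isomorphism and the monoidal axioms for $\otimes_{\Sh}$ follow by sheafifying those of $\otimes_{\PrSh}$.

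For the closed structure, define $[\F,\G]_{\Sh}$ to be the presheaf $U\mapsto \Hom_{\Sh{\X}{\D}}(\F\otimes_{\Sh}\y U, \G)$, where $\y U$ is the associated sheaf of the representable presheaf at $U$. The sheaf condition for this presheaf follows from the sheaf property of $\G$, and the required adjunction $\Hom_{\Sh}(\F\otimes_{\Sh}\G, \Hcal)\cong \Hom_{\Sh}(\F, [\G,\Hcal]_{\Sh})$ drops out of the adjunction $L\dashv i$ together with the closed structure on $\PrSh{\X}{\D}$. The main obstacle is precisely the middle step: proving that the sheafification $L$ is monoidal, i.e.\ that pointwise tensor with a fixed presheaf preserves local isomorphisms. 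This is exactly the content for which (G4) was designed, and closedness of $\D$ is what allows the argument to go through without invoking further hypotheses on the ambient category.
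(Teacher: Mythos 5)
On the tensor product your proposal follows the same route as the paper --- $\F\otimes\G$ is the sheafification of $U\mapsto \F(U)\otimes\G(U)$ --- but you go further: the paper's proof merely records the two formulas and does not verify that the coherence isomorphisms descend, whereas you correctly identify the real content, namely that the reflector $L$ must be (strong) monoidal, equivalently that tensoring with a fixed presheaf preserves local isomorphisms. That verification does work: on stalks one has $(\F\otimes\G)_x\cong \F_x\otimes\G_x$ because the diagonal is cofinal in the product of the filtered neighbourhood categories and $\otimes$ commutes with filtered colimits. Note, however, that the commutation with filtered colimits is exactly hypothesis (G3) of (3.4.1), which is available in context even when $\D$ is not closed; your appeal to closedness of $\D$ (so that $-\otimes A$ preserves all colimits) proves more than is needed and would leave the plain monoidal case of the lemma unsupported, since the parenthetical ``(closed)'' means the statement is also asserted for non-closed $\D$. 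Left exactness of $L$ via (G4) is not really what makes this step go through; it is the filtered-colimit compatibility of $\otimes$ that does.

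There is a genuine problem with your closed structure. You set $[\F,\G]_{\Sh}(U)=\Hom_{\Sh{\X}{\D}}(\F\otimes_{\Sh}\y U,\G)$, but this is a hom-\emph{set} of the category of sheaves, i.e.\ an object of $\Set$, not of $\D$; moreover $\F\otimes_{\Sh}\y U$ tensors a $\D$-valued sheaf against a $\Set$-valued one, which requires a tensoring of $\Sh{\X}{\D}$ over $\Set$-sheaves that you have not provided. So as written the formula does not produce an object of $\Sh{\X}{\D}$. The paper avoids this by using the internal hom of $\D$ itself: the inner hom is the sheaf associated to the presheaf
$$
U\mapsto \int_{V\rightarrow U}[\F(V),\G(V)]\ ,
$$
whose values lie in $\D$ by construction; your formula is the correct one only after replacing $\Hom_{\Sh{\X}{\D}}$ by a $\D$-enriched hom, which when unwound is precisely this end. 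The adjunction argument you sketch would then go through, but the repair is needed before the closed half of the lemma is established.
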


\begin{proof} If $\F, \G \in \Sh{\X}{\D}$, their tensor product $\F \otimes \G$ is the sheaf associated to the presheaf $ U \mapsto \F (U) \otimes \G (U)$. As for the inner hom functor, consider the presheaf
$$
U \mapsto \int_{V \rightarrow U} [\F (V) , \G (V) ] \ ,
$$
where the end is taken over all maps $V \longrightarrow U \in J(U)$. Then, the inner hom functor $ \Homsheaf (\F , \G)$ is the sheaf associated to this presheaf.
\end{proof}

\begin{remark}\label{hacesmonoidal} An operad in $\Sh{\X}{\D}$ is the same as a sheaf of operads in $\D$. That is,  $\Op^{\Sh{\X}{\D}}=\Sh{\X}{\Op^\D}$. Also an operad algebra in $\Sh{\X}{\D}$ is the same as a sheaf of operad algebras in $\D$; that is, $\Alg^{\Sh{\X}{\D}}=\Sh{\X}{\Alg^\D}$. Moreover, for any  operad of sheaves $\Pcal$, the category $\Alg_\Pcal^{\Sh{\X}{\D}}$ of $\Pcal$-algebras in  $\Sheaf (\X, \D)$ agrees with the category denoted in the same way as in definition (\ref{defiAlgSinMonoidal}).
\end{remark}

Next we prove the monoidalness of the two functors appearing in the Godement resolution.

\begin{lemma} Let $\D$ be a symmetric monoidal category satisfying hypotheses (G1) and (G3). Then, the pair of adjoint functors
$$
\xymatrix{ {\Sh{\X}{\D} }   \ar@<0.5ex>[r]^-{p^*}  &  {\D^X}
\ar@<0.5ex>[l]^-{p_*}
}
\ , \qquad \qquad \D^X (p^*\F , D) = \Sh{\X}{\D} (\F , p_*D)
$$
are monoidal ones.
\end{lemma}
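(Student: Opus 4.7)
The strategy is to make $p^*$ into a (strong) symmetric monoidal functor directly from hypothesis (G3), and then get the lax monoidal structure on $p_*$ for free from the doctrinal adjunction. Recall the product category $\D^X$ is monoidal component-wise, with unit the constant tuple $(\unidad_{\D})_{x\in X}$.

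\emph{Monoidal structure on $p^*$.} Fix $\F, \G \in \Sh{\X}{\D}$. For each $x\in X$, the stalks are filtered colimits $\F_x = \dirlim_{(U,u)} \F(U)$, indexed by the filtered category of pointed neighborhoods of $x$. Hypothesis (G3) says $A \otimes (-)$ preserves filtered colimits; by the standard two-variable argument (collapse the diagonal of the filtered product category, which is again filtered) this implies that $(-)\otimes(-)$ preserves filtered colimits jointly, so
$$
\F_x \otimes \G_x \;=\; \dirlim_{(U,u)} \F(U) \;\otimes\; \dirlim_{(V,v)} \G(V) \;\cong\; \dirlim_{(U,u)} \bigl(\F(U)\otimes \G(U)\bigr).
$$
The right hand side is the stalk at $x$ of the presheaf $U\mapsto \F(U)\otimes\G(U)$, which coincides with $(\F\otimes\G)_x$ (sheafification preserves stalks). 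This defines the K\"unneth morphism $\kappa^{p^*}_{\F,\G}:p^*\F \otimes p^*\G \longrightarrow p^*(\F\otimes\G)$ component-wise, and it is in fact an isomorphism. The unit is constructed analogously from the universal map $\unidad_{\D} \to (\unidad_{\Sh})_x$ at each point.

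\emph{Coherence for $p^*$.} The hexagon/pentagon and unit axioms for $(p^*,\kappa^{p^*},\eta^{p^*})$ reduce, stalk-wise, to the corresponding diagrams in $\D$ after commuting filtered colimits past the constraints $\alpha,\tau,\ell,r$, which is legitimate because these constraints are natural transformations and filtered colimits in $\D$ are functorial. Hence $p^*$ is a \emph{strong} symmetric monoidal functor.

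\emph{Monoidal structure on $p_*$.} Since $(p^*,p_*)$ is an adjunction and $p^*$ is strong symmetric monoidal, the doctrinal (Kelly) principle endows $p_*$ with a canonical lax symmetric monoidal structure: the K\"unneth morphism
$$
\kappa^{p_*}_{D,D'}: p_*D \otimes p_* D' \longrightarrow p_*(D\otimes D')
$$
is the mate of $(\kappa^{p^*})^{-1}$, namely the composite
$$
p_*D\otimes p_*D' \xrightarrow{\;\eta\;} p_* p^*(p_*D\otimes p_*D') \xrightarrow{p_*(\kappa^{p^*})^{-1}} p_*(p^*p_*D \otimes p^*p_*D') \xrightarrow{p_*(\varepsilon\otimes\varepsilon)} p_*(D\otimes D'),
$$
and the unit is $\unidad_{\Sh}\to p_*p^*\unidad_{\Sh}\cong p_*\unidad_{\D^X}$. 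Associativity, symmetry and unit coherence for $p_*$ then follow automatically from those of $p^*$, by the standard mate calculation.

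\emph{Main obstacle.} The only content that is not formal is the computation of the K\"unneth iso at each stalk, which is exactly where hypothesis (G3) enters; everything else is a mechanical application of the doctrinal adjunction. I would take care to verify explicitly that $(-)\otimes(-)$ preserves filtered colimits in both variables simultaneously (not only separately), since the formula for $p^*$ requires this joint commutation; the fact that the indexing categories of neighborhoods of $x$ are filtered is what makes the diagonal argument go through.
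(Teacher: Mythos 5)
Your argument is correct in substance but takes a genuinely different route from the paper's for the $p_*$ half. The paper constructs \emph{both} lax structures explicitly: for $x^*$ it writes the K\"unneth map as the composite $\F_x\otimes\G_x\cong\dirlim_U(\F(U)\otimes\G(U))\longrightarrow\dirlim_U(\F\otimes\G)(U)$, where only the first arrow (coming from (G3)) is asserted to be an isomorphism and the second is the map induced by sheafification; for $x_*$ it builds $\kappa_x:(x_*D)\otimes(x_*D')\to x_*(D\otimes D')$ by hand out of the comparison $(\prod_u D_u)\otimes(\prod_v D'_v)\to\prod_{(u,v)}D_u\otimes D'_v$, restriction to the diagonal, and the universal property of the associated sheaf; $p_*$ then inherits its structure because products are always lax monoidal. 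You instead upgrade $p^*$ to a \emph{strong} monoidal functor and obtain $\kappa^{p_*}$ as the mate via doctrinal adjunction. This is slicker and makes the coherence for $p_*$ automatic, but it buys this at the cost of one extra input: the invertibility of $\kappa^{p^*}$ requires that sheafification preserve stalks, which is not a consequence of (G1) and (G3) alone but rests on the exactness of the associated sheaf functor (Ulmer's conditions, i.e.\ (G1') and (G4) of the section's standing hypotheses (\ref{hipotesis2})). Since those hypotheses are in force throughout this section the step is legitimate, but you should say so explicitly; the paper's construction deliberately avoids needing this invertibility and, as a bonus, produces the explicit formulas for $\kappa_x$ and $\kappa_p$ that are reused later (e.g.\ in the description of the $\rdf f_*\Pcal$-algebra structure). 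Your remark about needing \emph{joint} preservation of filtered colimits, handled by finality of the diagonal of the filtered index category, is exactly the right point and is implicit in the paper's isomorphism $\dirlim_U\F(U)\otimes\dirlim_U\G(U)\cong\dirlim_U(\F(U)\otimes\G(U))$.
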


\begin{proof}
Let us first show that the pair of adjoint functors induced by points on the Grothendieck site
$$
\xymatrix{ {\Sh{\X}{\D}}  \ar@<0.5ex>[r]^-{x^*}  &  {\D}
\ar@<0.5ex>[l]^-{x_*}
}
\ , \qquad \qquad \D (x^*\F , D) = \Sh{\X}{\D} (\F , x_*D)
$$
are monoidal ones. Let us compute:
\begin{eqnarray*}
x^*(\F) \otimes x^*(\G) &=& \left( \dirlim_U \F (U)\right) \otimes \left( \dirlim_U \G (U) \right)
        \cong  \dirlim_U \left( \F (U) \otimes \G (U) \right)  \\
        &\longrightarrow& \dirlim_U (\F \otimes \G) (U)
        = x^* (\F \otimes \G ) \ .
\end{eqnarray*}
Here, the arrow is the one induced by the canonical map between a presheaf and its associated sheaf, $\F (U) \otimes \G (U) \longrightarrow (\F \otimes \G) (U)$, and the isomorphism follows from the assumption that filtered colimits commute with tensor products in $\D$.

As for $x_*$, we have:
$$
\xymatrix{
{\left( (x_*D) \otimes (x_*D') \right) (U)} \ar@{.>}[dd]^{\kappa_x}  & \ar[l]_-a { \left( \prod_u D_u \right)  \otimes \left( \prod_v D'_v \right) } \ar[d]^c       \\
           & {\prod_{(u,v)} \left( D_u\otimes D'_v \right) } \ar[d]^{\pi}    \\
{x_*(D\otimes D')(U)}   &   {\prod_u  \left(  D_u\otimes D'_u \right) }    \ar[l]_a
}
$$
Here $a$ is the canonical map between a presheaf and its associated sheaf, $c$ is the canonical morphism
defined in the obvious way, $\pi$ the natural projection and $\kappa$ the map induced by the universal property
of the associated sheaf.

Monoidalness of $p^* = (\F_x)_{x\in X}$ follows at once. As for $p_*D = \prod_{x\in X} x_*(D_x)$, $D = (D_x)_{x\in X}$, it is
enough to notice that products are always (lax) monoidal.
Hence, we have the following K\"{u}nneth map for $p_*$:
$$
\xymatrix{
{p_*D \otimes p_*D'} \ar@{=}[r] \ar@{.>}[d]^{\kappa_p}
            & { \left( \prod_{x\in X} x_*D \right) \otimes \left( \prod_{y\in X} y_*D' \right) } \ar[r]^-c
            & { \prod_{x,y} (x_*D) \otimes (y_*D') }  \ar[d]^{\pi}  \\
{p_* (D\otimes D')}  \ar@{=}[r]
            & {\prod_{x\in X} x_*(D\otimes D') }
            & {\prod_{x\in X} (x_*D) \otimes (x_*D') } \ar[l]_{\kappa_x}
}
$$
\end{proof}

As a consequence, we can finally state the fact that

\begin{proposition}\label{Godementisasymmetricmonoidalfunctor} $G^\bullet : \Sh{\X}{\D} \longrightarrow \Simpl\Sh{\X}{\D}$ is
a monoidal functor.
\end{proposition}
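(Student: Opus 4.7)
My plan is to deduce monoidalness of $G^\bullet$ from the previous lemma, using the fact that $G^\bullet$ is built entirely out of the monad $T = p_\ast p^\ast$ and its unit and multiplication, all of which will turn out to be monoidal.

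First, I would observe that, since $p^\ast$ and $p_\ast$ are both lax symmetric monoidal by the previous lemma, their composition $T = p_\ast p^\ast$ is lax symmetric monoidal as well. The K\"unneth morphism for $T$ is obtained by pasting the two K\"unneth morphisms in the obvious way:
\[
T\F \otimes T\G \ \stackrel{\kappa_{p_\ast}}{\longrightarrow}\ p_\ast(p^\ast\F \otimes p^\ast\G) \ \stackrel{p_\ast(\kappa_{p^\ast})}{\longrightarrow}\ T(\F\otimes \G),
\]
and the unit $\unidad \to T\unidad$ factors accordingly. Iterating, each $T^{p+1}$ is lax symmetric monoidal, so $G^p(\F) = T^{p+1}(\F)$ carries a natural K\"unneth morphism $G^p\F\otimes G^p\G \to G^p(\F\otimes\G)$, and $(G^p,\kappa)$ is a lax symmetric monoidal functor for every $p$.

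Next, I would check that the cosimplicial structure maps of $G^\bullet$ are themselves monoidal natural transformations. The cofaces of $G^\bullet$ are of the form $d^i = T^i\,\eta\, T^{p-i}$ and the codegeneracies of the form $s^i = T^i\,\mu\,T^{p-i}$, where $\eta : \id \to T$ is the unit of the adjunction $p^\ast \dashv p_\ast$ and $\mu = p_\ast \epsilon p^\ast : T^2 \to T$ comes from the counit. So it is enough to see that $\eta$ and $\mu$ are monoidal natural transformations; by the standard mate calculus for monoidal adjunctions, this in turn reduces to checking that $\eta$ is monoidal, i.e.\ that the square
\[
\xymatrix@C=24pt{ \F\otimes\G \ar[r]^-{\eta\otimes\eta} \ar[d]_{\eta} & T\F \otimes T\G \ar[d]^{\kappa_{T}} \\ T(\F\otimes\G) \ar@{=}[r] & T(\F\otimes\G) }
\]
commutes. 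But this is exactly the compatibility between the adjunction unit and the K\"unneth morphisms $\kappa_{p^\ast}$, $\kappa_{p_\ast}$ constructed in the previous lemma; it is obtained by a diagram chase using that $\eta$ is, at each object $U\in\X$, given by the universal arrow to $\prod_{u\in x^\ast(\mathbf{y}U)} \F_u$, and that the K\"unneth maps are built from the canonical maps between presheaves and associated sheaves together with the projections and the commutation $\dirlim(\F(U)\otimes\G(U)) \cong (\dirlim \F(U)) \otimes (\dirlim \G(U))$ (which is an isomorphism by hypothesis (G3)).

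With $\eta$ and hence $\mu$ monoidal, all the cofaces and codegeneracies of $G^\bullet\F \otimes G^\bullet\G \to G^\bullet(\F\otimes\G)$ commute with the K\"unneth transformations degree-wise, so we get a well-defined morphism of cosimplicial sheaves, and analogously for the unit map $c\unidad \to G^\bullet\unidad$. Symmetry of the K\"unneth morphism for $G^\bullet$ follows because it has been assembled out of the symmetric K\"unneth morphisms of $p^\ast$ and $p_\ast$. Hence $G^\bullet : \Sh{\X}{\D} \to \Simpl\Sh{\X}{\D}$, with $\Simpl\Sh{\X}{\D}$ endowed with the degree-wise monoidal structure, is a lax symmetric monoidal functor. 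The main technical point is the monoidalness of $\eta$; this is the only step that needs an actual diagram chase, and it is precisely here that hypothesis (G3) is used in an essential way.
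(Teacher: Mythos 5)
Your argument is correct and follows the same route the paper intends: the paper states this proposition as an immediate consequence of the preceding lemma on $p^*$ and $p_*$ (it offers no separate proof), and your write-up supplies exactly the missing details — the K\"unneth morphism for $T=p_*p^*$ obtained by pasting, its iterates $T^{p+1}$, and the compatibility with the cosimplicial structure maps. One small caution: since $p^*$ is only given a \emph{lax} (not strong) monoidal structure here, the monoidalness of $\mu = p_*\epsilon p^*$ does not formally reduce to that of $\eta$ by doctrinal adjunction or mate calculus; you should verify the corresponding square for the counit $\epsilon$ by the same kind of explicit diagram chase you describe for $\eta$, using the formulas $p^*\F=(\F_x)_{x\in X}$ and $p_*D=\prod_{x\in X}x_*(D_x)$ — it does go through.
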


So the cosimplicial Godement resolution $G^\bullet : \Sheaf(\X, \D) \longrightarrow \Delta\Sheaf(\X,\D)$ defines functors $ \E^{G^\bullet} :\E^{\Sheaf (\X,\D)} \longrightarrow \Delta \E^{\Sheaf(\X,\D)}$ for $\E = \Op, \Alg$ and $\Alg_\Pcal$, for every $\Pcal \in \Op^{\Sheaf (\X,\D)} = \Sheaf (\X , \Op^\D)$; which we shall still denote simply by $G^\bullet$.

\begin{corollary}\label{GodementMonoidal} For any sheaf of operads $\Pcal\in \Op^\D$ and any sheaf of $\Pcal$-algebras $\A$, $G^\bullet \Pcal$ is a cosimplicial sheaf of operads and $G^\bullet \A$ a cosimplicial $G^\bullet\Pcal$-algebra.
\end{corollary}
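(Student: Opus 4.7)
The plan is to read this as a purely formal consequence of Proposition \ref{Godementisasymmetricmonoidalfunctor}: any lax symmetric monoidal functor transports operad and operad-algebra structures, and the symmetric monoidal structure on $\Simpl \Sh{\X}{\D}$ is defined cosimplicial-degreewise, so operads (resp.\ algebras) in $\Simpl \Sh{\X}{\D}$ coincide with cosimplicial objects in $\Op^{\Sh{\X}{\D}} = \Sh{\X}{\Op^\D}$ (resp.\ in $\Alg_{G^\bullet\Pcal}^{\Sh{\X}{\D}}$, reading the algebra structure over the corresponding cosimplicial operad).

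More concretely, I would argue as follows. Let $\kappa : G^\bullet(-) \otimes G^\bullet(-) \longrightarrow G^\bullet(- \otimes -)$ and $\eta: \unidad \longrightarrow G^\bullet \unidad$ denote the Künneth and unit transformations provided by Proposition \ref{Godementisasymmetricmonoidalfunctor}. Iterating $\kappa$ yields, for each $l$ and $m_1,\dots,m_l$, a natural morphism
\[
\kappa^{(l)} : G^\bullet \Pcal(l) \otimes G^\bullet \Pcal(m_1) \otimes \cdots \otimes G^\bullet \Pcal(m_l)
\longrightarrow G^\bullet\bigl( \Pcal(l) \otimes \Pcal(m_1) \otimes \cdots \otimes \Pcal(m_l)\bigr),
\]
and I define the structural morphisms of $G^\bullet \Pcal$ and its unit by
\[
\gamma^{G^\bullet \Pcal}_{l;m_1,\dots,m_l} := G^\bullet(\gamma^\Pcal_{l;m_1,\dots,m_l}) \circ \kappa^{(l)},
\qquad G^\bullet \Pcal \text{ unit} := G^\bullet(\eta^\Pcal)\circ \eta.
\]
The required equivariance, associativity and unit axioms then reduce to the coherence identities for the symmetric monoidal functor $G^\bullet$ combined with the corresponding axioms for $\Pcal$; this is the usual verification that symmetric monoidal functors preserve operads. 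Analogously, for a $\Pcal$-algebra $\A$, composing the iterated Künneth with $G^\bullet$ applied to $\widehat\alpha_\A(l)$ produces structural morphisms
\[
G^\bullet \Pcal(l) \otimes (G^\bullet \A)^{\otimes l} \longrightarrow G^\bullet\bigl( \Pcal(l) \otimes \A^{\otimes l}\bigr) \xrightarrow{\; G^\bullet \widehat\alpha_\A(l)\;} G^\bullet \A,
\]
and the axioms of an algebra over the operad $G^\bullet \Pcal$ again follow from the coherence of $\kappa$ together with those for $\A$.

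Finally, to land in the stated conclusion I use that the symmetric monoidal structure on $\Simpl \Sh{\X}{\D}$ is defined degreewise, so that specifying an operad in $\Simpl \Sh{\X}{\D}$ is the same as specifying, in each cosimplicial degree $p$, a compatible operad in $\Sh{\X}{\D} \simeq \Sh{\X}{\D}$; by Remark \ref{hacesmonoidal} such operads are exactly sheaves of operads in $\D$. Thus $G^\bullet \Pcal$ is a cosimplicial sheaf of operads, and the same identification for algebras (over the cosimplicial operad $G^\bullet \Pcal$) gives the second assertion.

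I do not anticipate a real obstruction here: the only point deserving care is the bookkeeping that the monoidal structure used implicitly in the statement (namely, on $\Simpl \Sh{\X}{\D}$) is the degreewise one inherited from Lemma \ref{shsymclosed}, and that the natural transformations $\mu, \lambda$ do not enter the present corollary—only the monoidalness of the functor $G^\bullet$ itself, already established, is needed. This is precisely what is asserted in Proposition \ref{Godementisasymmetricmonoidalfunctor}.
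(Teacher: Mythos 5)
Your argument is correct and is essentially the paper's own: the corollary is presented there as an immediate consequence of Proposition \ref{Godementisasymmetricmonoidalfunctor} together with the general mechanism from Section 2 that any (lax symmetric) monoidal functor induces functors on operads and operad algebras arity-wise, exactly the iterated-K\"unneth construction you spell out. Your explicit verification of the structure maps and the degreewise identification of operads in $\Simpl\Sh{\X}{\D}$ with cosimplicial sheaves of operads just makes visible what the paper leaves implicit.
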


Finally, in order to have products on the derived direct image and global section functors, we note that we already have them before deriving.

\begin{proposition}\label{ImagenDirectaMonoidal} Let $f: \X \longrightarrow \Y$ be a morphism of sites, and $\D$ a monoidal category
satisfying hypotheses \emph{(\ref{hipotesis2})}. Then, $f_* : \Sheaf (\X,\D) \longrightarrow \Sheaf (\Y,\D)$ and
$\Gamma (U, -) : \Sheaf (\X,\D) \longrightarrow \D$ are monoidal functors.
\end{proposition}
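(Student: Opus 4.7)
The plan is to produce explicit K\"unneth morphisms for $\Gamma(U,-)$ and for $f_*$ directly from the unit of the sheafification adjunction, and then verify the coherence axioms by reducing to the trivially monoidal presheaf-level situation. Throughout, the key fact to keep in mind is that under hypotheses (\ref{hipotesis2}) the tensor product on $\Sheaf(\X,\D)$ is, by Lemma \ref{shsymclosed} and the construction reviewed there, obtained as the sheafification of the \emph{object-wise} tensor product of presheaves; so coherence at the sheaf level will be a formal consequence of coherence at the presheaf level, plus naturality of $(-)^a$.

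First I would treat $\Gamma(U,-)$. For $\F, \G \in \Sheaf(\X,\D)$, let $\F \otimes^{p} \G$ denote the presheaf $V \mapsto \F(V)\otimes \G(V)$, so that $\F\otimes\G = (\F \otimes^{p}\G)^{a}$. Taking the unit $\eta: \F\otimes^{p}\G \longrightarrow (\F\otimes^{p}\G)^{a}$ of the sheafification adjunction and evaluating at $U$ gives the K\"unneth map
$$
\kappa^{\Gamma}_{\F,\G} : \Gamma(U,\F)\otimes\Gamma(U,\G) \;=\; (\F \otimes^{p} \G)(U) \;\xrightarrow{\;\eta_{U}\;}\; (\F\otimes\G)(U) \;=\; \Gamma(U,\F\otimes\G),
$$
and the unit map $\unidad_{\D}\longrightarrow \Gamma(U,\unidad_{\Sheaf(\X,\D)})$ comes similarly from $\unidad_{\D}\longrightarrow \unidad_{\D}^{a}(U)$. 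Since evaluation $\F\mapsto \F(U)$ is \emph{strict} symmetric monoidal on $\PrSheaf(\X,\D)$ (because its monoidal structure is the object-wise one from $\D$), naturality of $\eta$ together with the universal property of $(-)^{a}$ forces the associativity, commutativity and unit constraints for $\kappa^{\Gamma}$ to hold.

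Next I would handle $f_*$. The functor $f_*$ is defined object-wise by a formula of the form $(f_*\F)(V) = \F(\widetilde f(V))$ (possibly as a filtered colimit over covering data on the $\X$-side, for which hypothesis (G3) ensures monoidality). Thus at the level of presheaves we get a canonical monoidal comparison
$$
f_*\F \otimes^{p} f_*\G \;\longrightarrow\; f_*(\F \otimes^{p}\G),
$$
induced by Step 1 applied object-wise in $\X$. Composing with the sheafification of the target and using the universal property of $(-)^{a}$ for the source produces $\kappa^{f_*}_{\F,\G}: f_*\F \otimes f_*\G \longrightarrow f_*(\F\otimes\G)$ in $\Sheaf(\Y,\D)$. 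The unit morphism $\unidad_{\Sheaf(\Y,\D)}\longrightarrow f_*\unidad_{\Sheaf(\X,\D)}$ is built identically.

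The main obstacle I expect is bookkeeping rather than substance: one must carefully track that sheafification is a symmetric monoidal left adjoint with respect to the chosen tensor product, and that the K\"unneth maps on the sheaf side satisfy the pentagon, hexagon and triangle diagrams. All of these reduce, by naturality of $\eta$ and uniqueness in the universal property of $(-)^{a}$, to the corresponding diagrams at the presheaf level, which hold strictly because the presheaf tensor product is inherited component-wise from the monoidal structure of $\D$. Once this reduction is set up cleanly, the proof is formal.
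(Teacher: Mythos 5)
Your proposal is correct and follows essentially the same route as the paper: the paper's (one-line) proof also takes the universal map $\theta_{f^{-1}(V)}:\F(f^{-1}(V))\otimes\G(f^{-1}(V))\longrightarrow(\F\otimes\G)(f^{-1}(V))$ from the object-wise presheaf tensor product to its associated sheaf as the Künneth morphism for $f_*$, and $\theta_U$ likewise for $\Gamma(U,-)$, leaving the coherence verifications implicit. Your additional reduction of the coherence axioms to the strictly monoidal presheaf level via naturality of the sheafification unit is exactly the bookkeeping the paper omits.
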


\begin{proof} Take the universal map between the presheaf and the associated sheaf:
$$
\theta_{f^{-1}(V)} : \F(f^{-1} (V)) \otimes \G (f^{-1} (V))
\longrightarrow (\F \otimes \G)(f^{-1}(V))
$$
and analogously for the global sections functor.
\end{proof}

\begin{corollary}
Let $f: \X \longrightarrow \Y$ be a morphism of sites, $\D$ a monoidal category satisfying hypotheses \emph{(\ref{hipotesis2})}, $\Pcal$ a sheaf of operads on $\X$ with values in $\D$, and $\A$ a $\Pcal$-algebra. Then:
\begin{enumerate}
\item $f_*\Pcal$ is a sheaf of operads on $\Y$ and $f_*\A$ a $f_*\Pcal$-algebra.
\item For every object $U\in \X$, $\Gamma (U, \Pcal)$ is an operad in $\D$ and $\Gamma (U,\A)$ a $\Gamma (U, \Pcal)$-algebra.
\end{enumerate}
\end{corollary}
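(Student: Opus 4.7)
The plan is to deduce the corollary directly from Proposition \ref{ImagenDirectaMonoidal}, exploiting the general fact that a (lax) symmetric monoidal functor transports operads to operads and operad algebras to operad algebras. The only real content is to package this fact correctly and check the constraints, since Proposition \ref{ImagenDirectaMonoidal} has already handed us the Künneth morphisms for $f_*$ and $\Gamma(U,-)$.

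First, I would record the general lemma, which is essentially folklore: given a lax symmetric monoidal functor $F:(\C,\otimes,\unidad)\longrightarrow(\D,\otimes,\unidad)$ with Künneth morphism $\kappa$ and unit $\eta$, and any operad $P\in\Op^\C$ with structure morphisms $\gamma^P_{l;m_1,\dots,m_l}$ and unit $P$-unit, the object $FP$ defined arity-wise by $(FP)(l)=F(P(l))$ acquires an operad structure via
\[
(FP)(l)\otimes (FP)(m_1)\otimes\cdots\otimes(FP)(m_l)
\xrightarrow{\kappa} F\!\left(P(l)\otimes P(m_1)\otimes\cdots\otimes P(m_l)\right)
\xrightarrow{F\gamma^P} F(P(m)),
\]
with unit $\unidad\xrightarrow{\eta}F\unidad\xrightarrow{F\eta_P}FP(1)$; the equivariance, associativity and unit axioms for $FP$ follow from the corresponding axioms for $P$ together with the coherence of $\kappa$ (naturality, associativity and symmetry with the monoidal constraints). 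The $\Sigma_l$-action on $(FP)(l)=F(P(l))$ comes from the functoriality of $F$. The same construction applied to the algebra structure morphisms $\widehat{\alpha}_A(l):P(l)\otimes A^{\otimes l}\longrightarrow A$ produces structure morphisms for $FA$ as an $FP$-algebra:
\[
FP(l)\otimes (FA)^{\otimes l}\xrightarrow{\kappa} F\!\left(P(l)\otimes A^{\otimes l}\right)\xrightarrow{F\widehat{\alpha}_A(l)} FA.
\]

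Next I would specialize this to the two functors at hand. By Proposition \ref{ImagenDirectaMonoidal}, both $f_*:\Sheaf(\X,\D)\longrightarrow \Sheaf(\Y,\D)$ and $\Gamma(U,-):\Sheaf(\X,\D)\longrightarrow \D$ are (lax) symmetric monoidal. Using Remark \ref{hacesmonoidal}, a sheaf of operads on $\X$ is the same as an operad in the monoidal category $\Sheaf(\X,\D)$, and similarly a sheaf of $\Pcal$-algebras is the same as an operad algebra over $\Pcal$ in this monoidal category. Hence applying the general lemma to $F=f_*$ gives (1): $f_*\Pcal$ is an operad in $\Sheaf(\Y,\D)$, i.e.\ a sheaf of operads on $\Y$, and $f_*\A$ is naturally an $f_*\Pcal$-algebra. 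Applying it to $F=\Gamma(U,-)$ gives (2): $\Gamma(U,\Pcal)$ is an operad in $\D$ and $\Gamma(U,\A)$ a $\Gamma(U,\Pcal)$-algebra.

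The only subtle point, and the main thing that deserves checking, is that the Künneth morphisms produced in the proof of Proposition \ref{ImagenDirectaMonoidal} (through the universal property of the associated sheaf, composed with the canonical factorization through the presheaf tensor product) are compatible with the associativity and symmetry constraints. This is routine but not tautological: one must verify that the maps $\theta_{f^{-1}(V)}:\F(f^{-1}V)\otimes \G(f^{-1}V)\longrightarrow (\F\otimes\G)(f^{-1}V)$, which are object-wise monoidal because $\D$ is, assemble into a natural transformation that is coherent with the associator and symmetry of $\Sheaf(\Y,\D)$; this is then preserved upon passing to the sheafification. Once this coherence is in hand, the operad and algebra axioms for $f_*\Pcal$, $f_*\A$, $\Gamma(U,\Pcal)$ and $\Gamma(U,\A)$ are forced. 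This is where I expect the only genuine (but entirely formal) work to lie.
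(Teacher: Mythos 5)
Your proposal is correct and follows exactly the route the paper intends: the corollary is stated without proof as an immediate consequence of Proposition \ref{ImagenDirectaMonoidal} together with the standing fact (already used in Propositions \ref{OperadsSonDeDescenso} and \ref{AlgebrasSonDeDescenso}) that a lax symmetric monoidal functor carries operads to operads and operad algebras to operad algebras, arity-wise via the K\"unneth morphism. Your explicit spelling-out of that folklore lemma and of the coherence check for $\kappa$ is a faithful expansion of what the paper leaves implicit.
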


So,  Theorem (\ref{hacesoperadyalgopsonCE}) applies and provide natural CE-structures on $\Op^{\Sh{\X}{\D}}$,  $\Alg^{\Sh{\X}{\D}}$ and $\Alg_\Pcal^{\Sh{\X}{\D}}$ allowing us to obtain $\rdf f_*$ by the usual formula.

\begin{theorem}\label{lastone} For any monoidal descent category $\D$ satisfying hypotheses \emph{(\ref{hipotesis2})} and compatible with site $\X$, the categories of operads and operad algebras $\Op^{\Sheaf (\X, \D)}$, $\Alg^{\Sheaf (\X,\D)}$ and $\Alg^{\Sheaf(\X,\D)}_\Pcal$ are also compatible with $\X$.
\end{theorem}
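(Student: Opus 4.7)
The plan is to reduce this result almost entirely to the previously established theorems \ref{hacesoperadyalgopsonCE} and \ref{CEAlgP}, exploiting the identifications made in Remark \ref{hacesmonoidal}. First I would observe that hypotheses (\ref{hipotesis2}) are strictly stronger than (\ref{hipotesis11}): (G1') implies (G1), and (G0), (G2), (G3) appear unchanged. Consequently, $\D$ satisfies the assumptions of all our previous results, and by Lemma \ref{lemaHipotesisAlg} the categories $\Op^\D$, $\Alg^\D$ and $\Alg^\D_P$ also satisfy (G1) and (G2). The fact that $\D$ is compatible with $\X$ means, in particular, that the Godement simple weakly commutes with stalks, and by Theorem \ref{hacesoperadyalgopsonCE} the same is true in $\Op^\D$, $\Alg^\D$ and $\Alg^\D_P$, so these are compatible descent categories for $\X$.

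Next, for the cases of operads and operad algebras over variable operads, I would invoke the identifications in Remark \ref{hacesmonoidal}, which tell us that $\Op^{\Sh{\X}{\D}} = \Sh{\X}{\Op^\D}$ and $\Alg^{\Sh{\X}{\D}} = \Sh{\X}{\Alg^\D}$. Since compatibility of $(\X,\Op^\D)$ and $(\X,\Alg^\D)$ has just been granted, the defining properties of compatibility, namely, that $\mbb{H}_{\X}(\F)$ is a fibrant model for every sheaf $\F$ and that $(\Sh{\X}{\E},\Scal,\W)$ is a CE-category, transfer immediately to $\Op^{\Sh{\X}{\D}}$ and $\Alg^{\Sh{\X}{\D}}$.

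For the third case, $\Alg^{\Sh{\X}{\D}}_\Pcal$, where $\Pcal$ is a sheaf of operads (not a constant one), one cannot simply quote Theorem \ref{hacesoperadyalgopsonCE}, which only treats the fixed constant operad case $P\in\Op^\D$. Instead I would appeal to Theorem \ref{CEAlgP}: Remark \ref{hacesmonoidal} asserts that the category $\Alg^{\Sh{\X}{\D}}_\Pcal$ defined via the monoidal structure on $\Sh{\X}{\D}$ coincides with the one introduced in Definition \ref{defiAlgSinMonoidal}. Since $(\X,\D)$ is compatible and $\D$ satisfies (\ref{hipotesis1}), Theorem \ref{CEAlgP} provides the CE-structure and the fibrant models $\rho_\A : \A \to \mbb{H}_{\X}\A$, which is precisely the content of compatibility of $(\X, \Alg^{\Sh{\X}{\D}}_\Pcal)$.

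The delicate point that must be checked carefully, and which I expect to be the main obstacle, is the coherence of these identifications: the monoidal structure on $\Sh{\X}{\D}$ involves the associated sheaf functor applied to $U\mapsto \F(U)\otimes\G(U)$, so an ``operad object'' in $\Sh{\X}{\D}$ has structure maps factoring through sheafification, whereas the ``sheaf of operads in $\D$'' viewpoint takes operad structures object-wise. These two viewpoints agree thanks to the universal property of sheafification together with Proposition \ref{Godementisasymmetricmonoidalfunctor} and Corollary \ref{GodementMonoidal}, which ensure that the Godement resolution and the simple functor are genuinely monoidal in this setting. Once this coherence is recorded, no new verification is needed: all required conditions follow from the identifications in Remark \ref{hacesmonoidal} combined with the two previous theorems.
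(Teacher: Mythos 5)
Your proposal is correct and follows essentially the same route as the paper, which gives no separate proof of Theorem \ref{lastone} but deduces it exactly as you do: from Theorem \ref{hacesoperadyalgopsonCE} for $\Op^{\Sh{\X}{\D}}$ and $\Alg^{\Sh{\X}{\D}}$ and from Theorem \ref{CEAlgP} for $\Alg^{\Sh{\X}{\D}}_{\Pcal}$, via the identifications of Remark \ref{hacesmonoidal} and the monoidality results of Section 3.4. Your explicit attention to the coherence between the object-wise and sheafified operad structures is a welcome elaboration of what the paper leaves implicit.
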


\begin{remark} Notice that $\rdf f_* (\Pcal, \A) = (\rdf f_* \Pcal , \rdf f_* \A)$, where the derived functors on the right are $\rdf f_* : \Sh{\X}{\Op^\D} \longrightarrow \Sh{\Y}{\Op^\D}$ and $\rdf f_*: \Sh{\X}{\D} \longrightarrow \Sh{\Y}{\D}$. This means that the derived
direct image functor can be computed by forgetting the $\Pcal$-algebra structure: it only depends on the sheaf $\A \in \Sh{\X}{\D}$. Here is another way of saying this: we have a commutative diagram
$$
\xymatrix{
{\Sh{\X}{\Alg^\D}[\W^{-1}]}  \ar[r]^{\rdf f_*} \ar[d]^{\psi}    & {\Sh{\Y}{\Alg^\D}[\W^{-1}]}    \ar[d]^{\psi}\\
{\Sh{\X}{\D}[\mathrm{\W}^{-1}]}  \ar[r]^{\rdf f_*}            & {\Sh{\Y}{\D}[\mathrm{\W}^{-1}]}
}
$$
in which $\psi$ is induced by the forgetful and exact functor $\Alg^\D \longrightarrow \D$. Nevertheless, as an object in $\Sh{\Y}{\Alg^\D}$, before localizing, $\rdf f_* \A$ has a $\rdf f_* \Pcal$-algebra structure thanks to
$$
\rdf f_* \Pcal (l) \otimes (\rdf f_* \A)^{\otimes l} \stackrel{\kappa}{\longrightarrow} \rdf f_* \left(\Pcal (l)\otimes \A^{\otimes l}\right) \stackrel{\rdf f_* \widehat{\alpha}_\A (l)}{\longrightarrow}  \rdf f_* \A  \ , \ l \geq 1 \ ,
$$
where $\rdf f_* \widehat{\alpha}_\A (l)$ are the morphisms induced by the $\Pcal$-algebra structure maps $\widehat{\alpha}_\A (l) : \Pcal (l) \otimes \A^{\otimes l} \longrightarrow \A$, $l \geq 1$, and $\kappa$ is the K\"{u}nneth morphism of the monoidal functor $\rdf f_* $.

Moreover, if we forget the operad structure,  as a $\Sigma$-module, $\rdf f_* \Pcal$ is just $(\rdf f_* \Pcal)(l) = \rdf f_* (\Pcal (l))$, $l\geq 1$, where $\rdf f_*$ on the right-hand side is again the derived functor $\rdf f_*: \Sh{\X}{\D} \longrightarrow \Sh{\Y}{\D}$, and the operad structure morphisms on $\rdf f_* \Pcal$ are induced by those of $\Pcal$ in the same way as before.

Hence, sheaf cohomology $\rdf \Gamma (X, \A)$ can be computed forgetting the operad algebra structure: it only depends on sheaf $\A \in \Sh{\X}{\D}$ (but it has the structure of a $\rdf \Gamma (X, \Pcal)$-algebra inherited from the $\Pcal$-algebra structure of $\A$). This is reminiscent of the well-known fact in classical sheaf cohomology (see, for instance, \cite{Har}, III Proposition 2.6 and Remark 2.6.1) that, if $(X , \Ocal_X)$ is a ringed space and $\F$ an $\Ocal_X$-module, $H^n(X; \F)$, despite being an $H^n (X; \Ocal_X)$-module, can be computed just with the abelian group structure of $\F$. It's also a particular case of our proposition (6.10) in \cite{RR}.
\end{remark}

\begin{remark} Theorem (\ref{lastone}) also applies to sheaves of operads and operad algebras in examples (\ref{KM}) and (\ref{NH}), since for these categories of sheaves we have an associate sheaf functor. Therefore we have, apparently, two ways of inducing products on the derived functors $\rdf f_*$ and $\rdf \Gamma$: the one obtained in section (\ref{fijo}) and the one described here. Both constructions are, of course, related: the first one is defined at the presheaf level and the latter is just the product induced on the associated sheaf by its the universal property. However, (\ref{lastone}) is not available for the cases treated in section (\ref{espectros}), because Kan complexes are not closed under limits.
\end{remark}

\subsubsection{Acknowledgements} This paper develops an idea suggested to us by Vicente Navarro. We owe him a debt of gratitude for sharing it with us. The second-named author also benefited from many fruitful conversations with Pere Pascual. We are indebted to Francisco Guill\'{e}n, Fernando Muro, Luis Narv\'{a}ez and Abd\'{o} Roig for their comments

\end{large}

\begin{thebibliography}{BE}
\bibitem[Be]{Be} M. Van den Bergh, \emph{On global deformation quantization in the algebraic case}, Journal of Algebra, {\bf 315} (2007), 326--395.
\bibitem[BK]{BK} A.K. Bousfield, D.M. Kan  \emph{Homotopy limits, completions and localizations} Lecture Notes in Math. \textbf{304}, (1972).
\bibitem[C]{C} J. Cirici, \emph{Cofibrant models of diagrams: mixed Hodge structures in rational homotopy}, Trans. Amer. Math. Soc. \textbf{367}, (2015), 5935--5970.
\bibitem[CG1]{CG1} J. Cirici, F. Guillén, \emph{Weight filtration on the cohomology of complex analytic spaces}, Journal of Singularities, \textbf{8} (2014), 83--99.
\bibitem[CG2]{CG2} J. Cirici, F. Guillén, \emph{$E_1$-formality of complex algebraic varieties}, Algebraic and Geometric Topology \textbf{14} (2014) 3049–-3079.
\bibitem[CG3]{CG3} J. Cirici, F. Guillén, \emph{Homotopy theory of mixed hodge complexes}, to appear in Tohoku Journal of Mathematics.
\bibitem[DP]{DP} A. Dold, D. Puppe, \emph{Homologie Nicht-Additiver Funktoren. Anwendungen}, Ann. Inst. Fourier, 11 (1961), 201--312.
\bibitem[F]{F} B. Fresse, \textit{Modules over Operads and Functors}, Lecture Notes in Math. \textbf{1967}, (2009).
\bibitem[Go]{Go} R. Godement, \emph{Topologie algébrique et th\'{e}orie des faisceaux}, Actualit\'{e}s sci. et ind. {\bf 1252}, Hermann (1973).
\bibitem[GK]{GK} J. Gamst, K. Hoechsmann, \emph{Products in sheaf cohomology}, Tohoku Math. Journ., {\bf 22}, (1970) 143--162.
\bibitem[GN]{GN} F. Guill\'{e}n, V. Navarro Aznar, \emph{Un crit\`{e}re d'extension des foncteurs d\'{e}finis sur les sch\'{e}mas lisses}, Publ. Math. IHES, \textbf{95} (2002), 1--91.
\bibitem[GNPR]{GNPR} F. Guill\'{e}n Santos, V. Navarro, P. Pascual, Agust\'{\i} Roig, \emph{A Cartan-Eilenberg approach to homotopical algebra}, J. Pure Appl. Algebra {\bf 214}, (2010), 140--164.
\bibitem[Har]{Har} R. Hartshorne, \emph{Algebraic geometry}, Springer GTM \textbf{52} (1977).
\bibitem[Hin]{Hin} V. Hinich, \emph{Deformations of sheaves of algebras},  Advances in Math.,  195 (2005), 102--164.
\bibitem[HS]{HS} V.A. Hinich, V.V. Schechtman, \emph{On homotopy limit of homotopy algebras}, in \emph{K-Theory, Arithmetic and Geometry}, Springer LNM {\bf 1289} (1987), 240--264.
\bibitem[Hir]{Hir} P.S. Hirschhorn, \emph{Model Categories and Their Localizations}, Math. Surveys and Monographs, \textbf{99}, Amer. Math. Soc., Providence (2002).
\bibitem[HSS]{HSS} M. Hovey, B. Shipley, J. Smith, \emph{Symmetric spectra}, J. Amer. Math. Soc., {\bf 13}(1) 149--208, (2000).
\bibitem[Ja1]{Ja1} J.F. Jardine, \emph{Cup products in sheaf cohomology}, Canad. Math. Bull., {\bf 29}, (1986), 469--477.
\bibitem[Ja2]{Ja2} J.F. Jardine, \emph{Presheaves of symmetric spectra}, J. Pure Appl. Algebra, {\bf 150}, (2000), 137--154.
\bibitem[KM]{KM} M. Kapranov, Y. Manin, \emph{Modules and Morita theorem for operads}, Amer. J. Math., {\bf 123}, (2001), 811--838.
\bibitem[Le]{Le} M. Levine, \emph{Mixed Motives}, Math. Surveys and Monographs, \textbf{57}, Amer. Math. Soc., Providence (1998).
\bibitem[MSS]{MSS} Markl, Shnider, Stasheff, \emph{Operads in Algebra, Topology and Physics}, Math. Surveys and Monographs, {\bf 96}, Amer. Math. Soc., Providence (2002).
\bibitem[Mit]{Mit} S.A. Mitchell, \emph{Hypercohomology spectra and Thomason's descent theorem}, in \emph{Algebraic K-Theory, Fields Institute Communications}, (1997), 221-278.
\bibitem[MV]{MV} F. Morel and V. Voevodsky, \emph{$\mbb{A}^1$-homotopy theory of schemes}, Pub. Math. IHES {\bf 90} (1999), 45--143.
\bibitem[N]{N} V. Navarro Aznar \emph{Sur la th\'{e}orie de Hodge-Deligne}, Inv. Math. {\bf 90} (1987), 11--76.
\bibitem[P]{P} P. Pascual, \emph{Some remarks on Cartan-Eilenberg categories}, Collect.Math. {\bf 63} (2012), 203-216.
\bibitem[Ro1]{Ro1} B. Rodr\'{\i}guez Gonz\'{a}lez, \emph{Simplicial descent categories}, J. Pure Appl. Algebra \textbf{216} no. 4, (2012),  775--788.
\bibitem[Ro2]{Ro2} B. Rodr\'{\i}guez Gonz\'{a}lez, \textit{Realizable homotopy colimits}, Theory and Applications of Categories, {\bf 29} no. 22, (2014), 609--634.
\bibitem[RR]{RR} B. Rodr\'{\i}guez Gonz\'{a}lez, A. Roig, \textit{Godement resolutions and sheaf homotopy theory}, Collect. Math. {\bf 66} no. 3, (2015), 423--452.
\bibitem[S]{S} S. Schwede, \emph{On the homotopy groups of symmetric spectra}, Geom. Topol. {\bf 12} No. 3, (2008), 1313--1344.
\bibitem[SGA1]{SGA1} S\'{e}minaire de G\'{e}ometrie Alg\'{e}brique SGA1 \emph{Rev\^{e}tements \'{e}tales et groupe fondamental}, Springer LNM {\bf 224}, (1971).
\bibitem[Sw]{Sw} R.G. Swan, \emph{Cup products in sheaf cohomology, pure injectives, and a substitute for projective resolutions}, J. Pure Appl. Algebra {\bf 144}, (1999), 169--211.
\bibitem[Th]{Th} R. W. Thomason, \emph{Algebraic K-theory and etale cohomology}, Ann. Sci. ENS {\bf 18} (1985), 437--552.
\bibitem[Ul]{Ul} F. Ulmer, \emph{On the existence and exactness of the associated sheaf functor}, J. Pure Appl. Algebra {\bf 3}, (1973), 295--306.
\bibitem[Ye]{Ye} A. Yekutieli, \emph{Deformation quantization in algebraic geometry}, Advances in Math. {\bf 198}, (2005), 383--432.
\end{thebibliography}
\end{document}